\numberwithin{equation}{section}
\newtheorem{proposition}{Proposition}[section]
\newtheorem{theorem}[proposition]{Theorem}
\newtheorem{lemma}[proposition]{Lemma}
\theoremstyle{definition}
\newtheorem{definition}[proposition]{Definition}
\theoremstyle{remark}
\newtheorem{remark}[proposition]{Remark}
\newcommand{\defeq}{\triangleq}
\newcommand{\Nset}{\mathbb{N}}
\newcommand{\Rset}{\mathbb{R}}
\newcommand{\dif}{\,\mathrm{d}}
\newcommand{\abs}[1]{\lvert #1 \rvert}
\newcommand{\biggabs}[1]{\biggl\lvert #1 \biggr\rvert}
\newcommand{\norm}[1]{\lVert #1 \rVert}
\newcommand{\compose}{\,\circ\,}
\DeclareMathOperator{\supp}{supp}
\newcommand{\st}{\ ;\ }
\author{Rémy Rodiac and Jean Van Schaftingen}
\title{
  Metric characterization of the sum of fractional Sobolev spaces}
\date{}
\address{
Universit\'e catholique de Louvain, Institut de Recherche en Math\'ematique et Physique, Chemin du Cyclotron 2 bte L7.01.01, 134 8 Louvain-la-Neuve, Belgium}
\email{remy.rodiac@uclouvain.be, jean.vanschaftingen@uclouvain.be}
\keywords{Fractional Sobolev spaces, weighted Sobolev spaces}
\subjclass[2010]{46E35}
\begin{document}

\begin{abstract}
We introduce a non-linear criterion which allows us to determine when a function can be written as a sum of functions belonging to homogeneous fractional spaces: for $\ell \in \mathbb{N}^*$, $s_i\in (0, 1)$ and $p_i \in [1, +\infty)$, $u : \Omega \to \mathbb{R}$ can be decomposed as 
$u = u_1+\dotsc+u_\ell$ with \ $u_i  \in \dot{W}^{s_i,p_i}(\Omega)$ if and only if 
$$
\iint\limits_{\Omega \times \Omega} \min_{1 \le i \le \ell} \frac{|u (x) - u (y)|^{p_i}}{|x - y|^{n+s_ip_i}}\,\mathrm{d}x \,\mathrm{d}y
<+\infty.
$$
\end{abstract}

\thanks{Both authors were supported by the Mandat d'Impulsion Scientifique F.4523.17, ``Topological singularities of Sobolev maps'' of the Fonds de la Recherche Scientifique--FNRS}

\maketitle

\section{Introduction}

Given $\Omega$  an open set of $\Rset^n$ or an $n$--dimensional Riemannian manifold with \(n\geq 1\), $s \in (0, 1)$ and $p \in [1, +\infty)$,
the \emph{homogeneous fractional Sobolev space} \(\dot{W}^{s, p} (\Omega)\) (or Slobodeskii space) is defined as 
\begin{equation*}
  \dot W^{s,p}(\Omega)
  \defeq
  \bigl\{ u : \Omega \to \Rset \st \text{\(u\) is measurable and } |u|_{W^{s,p} (\Omega)} <+\infty \bigr\},
\end{equation*}
where the \emph{Gagliardo semi-norm} of a measurable function \(u : \Omega \to \Rset\) is defined as
\begin{equation}
  \label{eq_Gagliardo_Norm}
  |u|_{W^{s,p} (\Omega)}^p\defeq \iint\limits_{\Omega \times \Omega} \frac{|u(x)-u(y)|^p}{d_\Omega (x, y)^{n+sp}} \dif x \dif y,
\end{equation}
and \(d_\Omega (x, y)\) denotes the Euclidean distance between the points \(x \in \Omega\) and \(y \in \Omega\)
when \(\Omega \subset \Rset^n\) or their geodesic distance when \(\Omega\) is a Riemannian manifold.

Fractional Sobolev spaces are linear spaces which can be summed, given \(\ell \in \mathbb{N}^*\), \(s_1, \dotsc, s_\ell \in (0, 1)\)
and \(p_1, \dots, p_\ell \in [1, +\infty)\), as 
\begin{equation}
  \label{eq_sum_spaces}
\dot{W}^{s_1, p_1} (\Omega) + \dotsb + 
\dot{W}^{s_\ell, p_\ell} (\Omega)
\defeq \bigl\{u_1 + \dotsb + u_\ell \st \text{for each \(i \in \{1, \dotsc, \ell\}\), } u_i \in \dot{W}^{s_i, p_i} (\Omega)\bigr\}.
\end{equation}
While the definition of the Gagliardo semi-norm \eqref{eq_Gagliardo_Norm} extends readily to the case where the target space \(\Rset\) is replaced by any metric space, the definition of the sum \eqref{eq_sum_spaces} relies strongly on the linear structure of the space.
The goal of the present work is to give the sum of fractional Sobolev spaces a metric characterization which does not depend on the linear structure of the space \(\Rset\) and thus to pave the way to a definition of the sum of some nonlinear spaces.

\begin{theorem}\label{theorem_main}
If either $\Omega=\Rset^n$, or $\Omega \subset \Rset^n$ is a Lipschitz bounded open set, or $\Omega$ is an $n$--dimensional Lipschitz compact manifold with a possibly empty boundary, then 
\begin{multline*}
  \dot{W}^{s_1, p_1} (\Omega) + \dotsb + \dot{W}^{s_\ell, p_\ell} (\Omega) \\
  = 
  \Biggl\{ u : \Omega \to \Rset \st \text{\(u\) is measurable and }  \iint\limits_{\Omega \times \Omega}
  \min_{1 \le i \le \ell} \frac{\abs{u (x)-u (y)}^{p_i}}{d_\Omega (x, y)^{n + s_i p_i}}
  \dif x \dif y
  <+\infty
  \Biggr\}.
\end{multline*}
\end{theorem}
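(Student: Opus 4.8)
The plan has two halves: the inclusion ``$\subseteq$'' (every function in the sum satisfies the minimal integral bound) is elementary and needs nothing about $\Omega$, whereas ``$\supseteq$'' is the substantial part and is where the hypotheses on $\Omega$ enter.

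\textbf{The easy inclusion.} Suppose $u = u_1 + \dotsb + u_\ell$ with $u_i \in \dot W^{s_i,p_i}(\Omega)$. For almost every $(x,y)\in\Omega\times\Omega$ one has $\abs{u(x)-u(y)}\le\sum_{i=1}^{\ell}\abs{u_i(x)-u_i(y)}$, so the \emph{smallest} index $k=k(x,y)$ with $\abs{u_k(x)-u_k(y)}\ge\tfrac1\ell\abs{u(x)-u(y)}$ is well defined and measurable, and it splits $\Omega\times\Omega$ into measurable sets $E_1,\dotsc,E_\ell$. On $E_k$ I would bound the minimum by its $k$-th term and then by $\ell^{p_k}\abs{u_k(x)-u_k(y)}^{p_k}d_\Omega(x,y)^{-n-s_kp_k}$, so that summing over $k$ gives
\[
  \iint\limits_{\Omega\times\Omega}\min_{1\le i\le\ell}\frac{\abs{u(x)-u(y)}^{p_i}}{d_\Omega(x,y)^{n+s_ip_i}}\dif x\dif y \;\le\; \sum_{k=1}^{\ell}\ell^{p_k}\,\abs{u_k}_{W^{s_k,p_k}(\Omega)}^{p_k}\;<\;+\infty .
\]

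\textbf{Reduction and reformulation for ``$\supseteq$''.} Now assume the double integral above is finite. A routine localization together with a single extension operator that is bounded $\dot W^{s,p}(\Omega)\to\dot W^{s,p}(\Rset^n)$ for every $s\in(0,1)$ and $p\in[1,+\infty)$ (reflection for a bounded Lipschitz open set, charts plus reflection for a compact Lipschitz manifold) reduces the statement to $\Omega=\Rset^n$; the one delicate point is that the finiteness of the minimal double integral survives the extension, which is checked from the pointwise bounds underlying the extension, a Poincar\'e‑type inequality, and the same partition‑of‑pairs bookkeeping as above in the collar. So take $\Omega=\Rset^n$, write $A_t u(x)\defeq\fint_{B(x,t)}u$, and recast the hypothesis in averaged form: a Fubini computation together with the elementary two‑sided bound $\min_i a^{p_i}r^{-n-s_ip_i}\approx\int_r^{+\infty}\min_i\bigl(a^{p_i}t^{-s_ip_i}\bigr)t^{-n-1}\dif t$ gives
\[
  \iint\limits_{\Rset^n\times\Rset^n}\min_{1\le i\le\ell}\frac{\abs{u(x)-u(y)}^{p_i}}{\abs{x-y}^{n+s_ip_i}}\dif x\dif y \;\approx\; \int_{\Rset^n}\int_0^{+\infty}\fint_{B(x,t)}\min_{1\le i\le\ell}\frac{\abs{u(y)-u(x)}^{p_i}}{t^{s_ip_i}}\dif y\,\frac{\dif t}{t}\dif x .
\]

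\textbf{The construction.} Starting from the telescoping identity $u=\int_0^{+\infty}(-\partial_tA_tu)\dif t$ (valid on $\Rset^n$ up to a coarse remainder, which one handles by a standard truncation‑in‑scale argument, assigning it to the space with the largest smoothness parameter and working modulo an additive constant), I would fix the measurable selection $\sigma(x,t)\in\{1,\dotsc,\ell\}$ equal to an index realizing $\min_{1\le i\le\ell}\abs{A_{2t}u(x)-A_tu(x)}^{p_i}t^{-s_ip_i}$ and set
\[
  u_i(x)\;\defeq\;\int_0^{+\infty}\bigl(-\partial_tA_tu(x)\bigr)\,\mathbf 1[\sigma(x,t)=i]\,\dif t,\qquad i=1,\dotsc,\ell,
\]
so that $u=u_1+\dotsb+u_\ell$ automatically. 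For each $i$ the ``diagonal'' part of $\abs{u_i}_{W^{s_i,p_i}(\Rset^n)}^{p_i}$ — the contribution of pairs $(x,y)$ with $\abs{x-y}$ comparable to the retained scale — is dominated by $\int_{\Rset^n}\int_0^{+\infty}\mathbf 1[\sigma(x,t)=i]\,\abs{A_{2t}u(x)-A_tu(x)}^{p_i}t^{-s_ip_i}\,\tfrac{\dif t}{t}\dif x$; summing over $i$ turns the integrand into $\min_{1\le i\le\ell}\abs{A_{2t}u(x)-A_tu(x)}^{p_i}t^{-s_ip_i}$, and this is $\lesssim$ the right‑hand side of the reformulation because $\abs{A_{2t}u(x)-A_tu(x)}\lesssim\fint_{B(x,2t)}\abs{u(y)-u(x)}\dif y$ while $a\mapsto\min_i c_ia^{p_i}$ satisfies a Jensen‑type inequality $\fint_B\min_i c_ig^{p_i}\ge c\min_i c_i(\fint_Bg)^{p_i}$ with $c=c(\ell,p_1,\dotsc,p_\ell)>0$, so the averages can be pushed inside.

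\textbf{The main obstacle and conclusion.} The real work is the remaining, genuinely non‑local part of $\abs{u_i}_{W^{s_i,p_i}(\Rset^n)}^{p_i}$: one must estimate, for each pair $(x,y)$, the quantity $\int_0^{+\infty}\bigl(-\partial_tA_tu(x)\,\mathbf1[\sigma(x,t)=i]+\partial_tA_tu(y)\,\mathbf1[\sigma(y,t)=i]\bigr)\dif t$, and since $\sigma$ is a non‑linear functional of $u$ the two ``restricted telescopings'' near $x$ and near $y$ need not match, nor can an index‑$i$ quantity be dominated by the minimal density everywhere (there is a true mismatch between $\min_i$ of averages and the average of $\min_i$). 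My plan is to cut the $t$‑integral at $t\sim\abs{x-y}$: on the coarse range $t\gtrsim\abs{x-y}$ a doubling/Vitali covering argument compares $-\partial_tA_tu$ and $\sigma$ at $x$ with those at $y$, the discrepancy being charged to terms already present in the reformulated energy; on the fine range $t\lesssim\abs{x-y}$ one estimates the two truncated pieces separately through a ``restricted oscillation is controlled by the restricted minimal density'' lemma, which is exactly the point forcing the careful choice of $\sigma$. Summing the two ranges, integrating in $(x,y)$, and then undoing the reduction to $\Rset^n$ yields the decomposition and finishes the proof; I expect this last non‑local estimate to be the main obstacle.
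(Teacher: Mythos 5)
Your easy inclusion is fine and matches \cref{proposition_estimate_sum} essentially verbatim. Your strategic picture for the converse — a scale decomposition of $u$, a measurable index selection $\sigma(x,t)$ picking out at each scale which $\dot W^{s_i,p_i}$ gets charged, and a reconstruction of the pieces — also matches the skeleton of the paper's \cref{proposition_decomposition_Rn}, where $-\partial_t A_t u$ is replaced by the gradient $\nabla U$ of an extension to $\Rset^{n+1}_+$, your selection $\sigma$ becomes the partition of $\Rset^{n+1}_+$ into sets $A_1,\dots,A_\ell$, and $\Theta_i = \mathbf{1}_{A_i}\nabla U$. Up to this point you are in step with the paper.

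The genuine gap is in the definition of $u_i$. You set
\[
  u_i(x) \;=\; \int_0^{+\infty}\bigl(-\partial_t A_t u(x)\bigr)\,\mathbf 1[\sigma(x,t)=i]\,\dif t,
\]
a \emph{fiberwise} reconstruction: $u_i(x)$ depends only on the vertical ray over $x$. The selection $\sigma(x,t)$ is a wildly discontinuous function of $x$, and the difference $u_i(x)-u_i(y)$ then involves $\int_0^{\infty}\bigl(-\partial_t A_t u(x)\,\mathbf1[\sigma(x,t)=i]+\partial_t A_t u(y)\,\mathbf1[\sigma(y,t)=i]\bigr)\dif t$, which — as you yourself flag — has no usable structure: even on the coarse range $t\gtrsim|x-y|$, where $A_t u(x)\approx A_t u(y)$, the indicators can disagree on a set of full measure, and a Vitali/doubling comparison gives you no handle on $\mathbf1[\sigma(x,\cdot)=i]-\mathbf1[\sigma(y,\cdot)=i]$. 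The "restricted oscillation controlled by restricted minimal density" lemma you invoke in the fine range is precisely the estimate you would need and is not available for a fiberwise integral; I do not see how your plan would produce it.

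What the paper does instead (and what makes the estimate go through) is to rebuild $u_i$ from $\Theta_i$ by a \emph{spatial convolution against a reconstruction kernel} (\cref{def_reconstruction}, \cref{prop_reconstruction}):
\[
  u_i(x)\;=\;-\iint\limits_{\Rset^{n+1}_+}\psi\Bigl(\tfrac{y-x}{t}\Bigr)\cdot\frac{\Theta_i(y,t)}{t^n}\,\dif t\,\dif y,
\]
so $u_i(x)$ averages $\Theta_i$ over the cone $\{(y,t):|y-x|\le t\}$, not only over the fiber. The sum $u_1+\dots+u_\ell=u$ still holds because $\sum_i\Theta_i=\nabla U$ and the kernel integrates $\nabla U$ back to $u$. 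The payoff is that $u_i(x)-u_i(y)$ now involves $\psi((z-x)/t)-\psi((z-y)/t)$, which is $O(|x-y|/t)$ by Lipschitz continuity of $\psi$ on the coarse range and has disjoint supports on the fine range; this is exactly what feeds the two Hardy inequalities (\cref{lemma_Hardy}) in \cref{prop_estimate_reconstruction}, giving $|u_i|_{W^{s_i,p_i}}^{p_i}\lesssim\iint|\Theta_i|^{p_i}/t^{1-(1-s_i)p_i}$ cleanly, with no reference to the irregularity of the selection. This is the single ingredient your proposal is missing; without replacing the fiberwise integral by a cone-averaged reconstruction, the non-local part of the Gagliardo norm cannot be closed. (Two secondary remarks: you would also need to address the case $\int_{\Rset^n}|u(x)|/(1+|x|^n)\,\dif x=+\infty$, which the paper handles by truncation, a cut-off lemma (\cref{lemma_cut_domain}), normalizing averages, and Rellich compactness; and the paper goes from $\Rset^n$ to balls to manifolds by reflection and partition of unity with an explicit low-frequency term, which is the rigorous version of the "delicate" extension/Poincaré step you gesture at.)
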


\Cref{theorem_main} is closely linked to its counterpart for the \emph{intersection} of fractional Sobolev spaces
\begin{multline}
\dot{W}^{s_1, p_1} (\Omega) \cap \dotsb \cap  \dot{W}^{s_\ell, p_\ell} (\Omega) \\
= 
\biggl\{ u : \Omega \to \Rset \st \text{\(u\) is measurable and }    \iint\limits_{\Omega \times \Omega}
\max_{1 \le i \le \ell} \frac{\abs{u (x)-u (y)}^{p_i}}{d_\Omega (x, y) ^{n + s_i p_i}}
\dif x \dif y <+\infty
\biggr\},
\end{multline}
whose proof is direct.
The counterpart of \cref{theorem_main} for \(L^p\) spaces is 
\begin{equation}
  \label{lp_decomposition}
L^{p_1} (\Omega) + \dotsb + L^{p_\ell} (\Omega) = 
\biggl\{ u : \Omega \to \Rset \st \text{\(u\) is measurable and }    \int_{\Omega}
\min_{1 \le i \le \ell} \abs{u}^{p_i} <+\infty
\biggr\}.
\end{equation}

Our study of the problem was motivated by the appearance of the space \( W^{s, p} (\Omega) + W^{1, sp} (\Omega)\)
in the lifting of maps in \(W^{s, p} (\Omega, \mathbb{S}^1)\) \citelist{\cite{Bourgain_Brezis_Mironescu2000}\cite{Mironescu_preprint}\cite{Mironescu2008}\cite{Bourgain_Brezis2003}\cite{Nguyen2008}}.
A first step of the generalization of these results to liftings over a general covering of a manifold \cite{Bethuel_Chiron2007} is the definition of an appropriate nonlinear counterpart of \(W^{s, p} (\Omega) + W^{1, sp} (\Omega)\). The present work shows how to define this for fractional Sobolev spaces.

\bigbreak 

The proof of the inclusion of the sum in \cref{theorem_main} is quite straightforward and independent on any assumption on the domain \(\Omega\) (\cref{proposition_estimate_sum}). The associated estimate has a constant that remains bounded if the number \(\ell \in \Nset\) and the exponents \(p_1, \dotsc, p_\ell \in [1, +\infty) \) remain bounded.

For the converse inclusion, we first recall that the function \(u : \Omega \to \Rset\) in the right-hand side of \eqref{lp_decomposition} can be decomposed by defining the functions \(u_1, \dotsc, u_\ell\) in such a way that for every \(x\) and every \(j \in \Nset\), one has \(u_j (x) \in \{0, u (x)\}\) and \(\sum_{j = 1}^\ell \abs{u_j (x)}^{p_j} = \min_{1 \le i \le \ell} \abs{u_i (x)}^{p_i}\).
Such an approach fails for fractional Sobolev spaces because these spaces are defined by a double integral, and this strategy would provide a decomposition on \(\Omega \times \Omega\) rather than on \(\Omega\).

We tackle the problem through the characterization of fractional Sobolev spaces as traces of weighted Sobolev spaces \citelist{\cite{Gagliardo_1957}\cite{Uspenskii}\cite{Mironescu_Russ2015}}. 
When \(\Omega = \Rset^n\), we show that any measurable function \(u : \Rset^n \to \Rset\), has an extension \(U : \Rset^{n + 1}_+ \to \Rset\) such that 
\[
\iint\limits_{\Rset^{n + 1}_+}
\min_{1 \le i \le \ell} \frac{\abs{\nabla U (x, t)}^{p_i}}{t^{1 - (1 - s) p_i}} \dif t \dif x
\le 
C
\iint\limits_{\Rset^n \times \Rset^n}
\min_{1 \le i \le \ell} \frac{\abs{u (x) - u (y)}^{p_i}}{\abs{x - y}^{n + s_i p_i}} \dif x \dif y.
\]
We then decompose the derivative \(\nabla U\) of this extension into functions \(\Theta_1, \dotsc, \Theta_\ell : \Rset^{n + 1}_+ \to \Rset^{n + 1}\) that satisfy weighted estimates; these are not necessarily derivatives but can be used to construct functions \(u_1, \dotsc, u_\ell : \Rset^n \to \Rset\) that are controlled by some trace estimates.
The resulting estimates blow up when \(s_i \to 1\).

When \(\Omega\) is a domain or a manifold, we first prove the decomposition by an extension argument on a ball 
and then extend the theorem to general domains and manifolds through local charts, a partition of the unity
and a suitable estimate on a low-frequency part that connects the local patches.
The regularity assumptions on the domain \(\Omega\) that we are making are probably not optimal in view of 
the possibility of extending functions under much weaker assumptions \cite{Zhou2015}.

\section{Nonlinear estimate of sums}

In this section, we prove that any sum of fractional Sobolev functions satisfies an estimate on a minimum.

\begin{proposition}
  \label{proposition_estimate_sum}
Let \(\ell \in \Nset^*\), let \(s_1, \dotsc, s_\ell \in (0, 1)\), let \(p_1, \dotsc, p_\ell \in [1, +\infty)\) and let \(\Omega\) be an \(n\)--dimensional Riemannian manifold with a possibly non-empty boundary.
If the functions \(u_1, \dotsc, u_\ell : \Omega \to \Rset\) are measurable and if \(u \defeq u_1 + \dotsb + u_\ell\),
then 
\[
  \iint\limits_{\Omega \times \Omega}
  \min_{1 \le i \le \ell} \frac{\abs{u (x)-u (y)}^{p_i}}{d_\Omega (x, y)^{n + s_i p_i}}
    \dif x \dif y
    \\ 
    \leq \iint\limits_{\Omega \times \Omega}
    \max_{1 \le i \le \ell} \ell^{p_i} \frac{\abs{u_i (x)-u_i (y)}^{p_i}}{d_\Omega (x, y)^{n + s_i p_i}}
    \dif x \dif y.    
\]
\end{proposition}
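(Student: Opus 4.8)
The plan is to derive the inequality between the two integrals from a pointwise inequality between their integrands, valid for every pair \((x, y) \in \Omega \times \Omega\) with \(x \ne y\), and then to integrate.

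First I would fix such \(x, y\), abbreviate \(d \defeq d_\Omega(x, y)\), and choose an index \(j \in \{1, \dotsc, \ell\}\) realising \(\max_{1 \le i \le \ell} \abs{u_i(x) - u_i(y)}\). The triangle inequality applied to the identity \(u = u_1 + \dotsb + u_\ell\) gives
\[
  \abs{u(x) - u(y)} \le \sum_{i=1}^\ell \abs{u_i(x) - u_i(y)} \le \ell \, \abs{u_j(x) - u_j(y)},
\]
so that, raising to the power \(p_j\) and dividing by \(d^{n + s_j p_j}\),
\[
  \min_{1 \le i \le \ell} \frac{\abs{u(x) - u(y)}^{p_i}}{d^{n + s_i p_i}}
  \le \frac{\abs{u(x) - u(y)}^{p_j}}{d^{n + s_j p_j}}
  \le \ell^{p_j}\, \frac{\abs{u_j(x) - u_j(y)}^{p_j}}{d^{n + s_j p_j}}
  \le \max_{1 \le i \le \ell} \ell^{p_i}\, \frac{\abs{u_i(x) - u_i(y)}^{p_i}}{d^{n + s_i p_i}}.
\]
Integrating this estimate over \(\Omega \times \Omega\) yields the claim. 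Here I would also record that the integrands are measurable: each map \((x,y) \mapsto \abs{u_i(x) - u_i(y)}^{p_i}/d_\Omega(x,y)^{n + s_i p_i}\) is measurable on \(\Omega \times \Omega\) (the geodesic distance being continuous), hence so are the finite minimum and maximum of such functions; and if the right-hand integral equals \(+\infty\) there is nothing to prove.

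I do not expect a genuine obstacle: the estimate is nothing but the triangle inequality combined with the elementary bound of a sum of \(\ell\) terms by \(\ell\) times the largest of them, which is precisely where the constant \(\ell^{p_i}\) enters. The only subtlety worth spelling out is that the \emph{same} index \(j\) must appear simultaneously in the numerator \(\abs{u_j(x) - u_j(y)}\) and in the exponents \(s_j\), \(p_j\); this is what forces the \(\min\) on the left-hand side and the \(\max\) on the right-hand side, and it is why no cross term between distinct indices ever has to be estimated.
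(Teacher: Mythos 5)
Your argument is correct and coincides with the paper's proof: both fix an index \(j\) realising \(\max_i \abs{u_i(x)-u_i(y)}\), apply the triangle inequality to bound \(\abs{u(x)-u(y)}\) by \(\ell\,\abs{u_j(x)-u_j(y)}\), deduce the pointwise inequality between the \(\min\) and the \(\max\), and integrate. The brief remarks on measurability and the infinite case are sound but incidental.
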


\begin{proof}
For every \(x, y \in \Omega\) such that \(x \ne y\), there exists \(j \in \{1, \dotsc, \ell\}\) such that 
\[
\abs{u_j (x) - u_j (y)} = \max_{1 \le i \le \ell} \, \abs{u_i (x) - u_i (y)},
\]
and thus by the triangle inequality, we have 
\[
\abs{u (x) - u (y)} \le 
\sum_{i = 1}^\ell \abs{u_i (x) - u_i (y)}
\le \ell \abs{u_j (x) - u_j (y)}.
\]
Therefore, for every \(x, y \in \Omega\),
\[
\frac{\abs{u (x) - u (y)}^{p_j}}
{d_\Omega (x, y)^{n + s_j p_j}}
\le \ell^{p_j} \frac{\abs{u_j (x) - u_j (y)}^{p_j}}
{d_\Omega (x, y)^{n + s_j p_j}}
\]
and thus 
\begin{equation}
  \label{eq_PheeZahr8ahy1goot1j}
\min_{1 \le i \le \ell}
\frac{\abs{u (x) - u (y)}^{p_i}}
{d_\Omega (x, y)^{n + s_i p_i}}
\le\max_{1 \le i \le \ell}\ell^{p_i} \frac{\abs{u_i (x) - u_i (y)}^{p_i}}
{d_\Omega (x, y)^{n + s_i p_i}} .
\end{equation}
The conclusion then follows by integrating the inequality \eqref{eq_PheeZahr8ahy1goot1j} with respect to \((x, y)\) over the set \(\Omega \times \Omega\).
\end{proof}

\begin{remark}
  \Cref{proposition_estimate_sum} can also be proved by Jensen's inequality applied to a suitable inf-convolution: one defines for each \(x, y \in \Omega\) the function \(\Phi_{x, y} : \Rset \to [0, +\infty)\) for every \(t \in \Rset\) by 
  \[
  \Phi_{x, y} (t) \defeq 
  \inf \biggl\{ \frac{\abs{t_1}^{p_1}}{d_\Omega (x, y)^{s_1 p_1}} + \dotsb +  \frac{\abs{t_\ell}^{p_\ell}}{d_\Omega (x, y)^{s_\ell p_\ell}}
  \st t_1, \dotsc, t_\ell \in \Rset \text{ and } t_1 + \dotsb + t_\ell = t\biggr\},
  \]
  one observes that the function \(\Phi_{x, y}\) is convex since \(p_1, \dotsc, p_\ell \ge 1\);
since \(u = \frac{1}{\ell} (\ell u_1 + \dotsb + \ell u_\ell)\), one has by Jensen's inequality
  \[
  \iint\limits_{\Omega \times \Omega} \frac{\Phi_{x, y} (u (x) - u (y))}{d_\Omega (x, y)^n} \dif x \dif y
  \le \sum_{i = 1}^\ell \frac{1}{\ell} \iint\limits_{\Omega \times \Omega} \frac{\Phi_{x, y} (\ell (u_i (x) - u_i (y)))}{d_\Omega (x, y)^n} 
  \dif x \dif y;
  \]
  one concludes by observing that for each \(x, y \in \Omega\) and \(t \in \Rset\), by definition of \(\Phi_{x, y}\),
  \[
  \min_{1 \le i \le \ell} \frac{\abs{t}^{p_i}}{\ell^{p_i}d_\Omega (x, y)^{s_ip_i}}
  \le 
  \Phi_{x, y} (t) \le \min_{1 \le i \le \ell} \frac{\abs{t}^{p_i}}{d_\Omega (x, y)^{s_ip_i}}.
  \]
\end{remark}

\section{Decomposition of functions in the Euclidean space}\label{sec:rn}

We decompose here measurable function on the Euclidean space \(\Rset^n\) with an estimate involving fractional Sobolev spaces.

\begin{proposition}
  \label{proposition_decomposition_Rn}
  Let \(n \in \Nset^*\), let \(\ell \in \Nset^*\), let \(s_1, \dotsc, s_\ell \in (0, 1)\) and let \(p_1, \dotsc, p_\ell \in [1, + \infty)\).
  There exists a constant \(C\) such that for every measurable function
  \(u : \Rset^n \to \Rset\) 
  there exist measurable functions \(u_1, \dotsc, u_\ell : \Rset^n \to \Rset\) such that \(u = u_1 + \dotsb + u_\ell\) almost everywhere on \(\Rset^n\) and 
  \[
  \iint\limits_{\Rset^n \times \Rset^n}
  \max_{1 \le i \le \ell}  \frac{\abs{u_i (x)-u_i (y)}^{p_i}}{\abs{x - y}^{n + s_i p_i}}
  \dif x \dif y
  \le 
  C
  \iint\limits_{\Rset^n \times \Rset^n}
  \min_{1 \le i \le \ell} \frac{\abs{u (x)-u (y)}^{p_i}}{\abs{x - y}^{n + s_i p_i}}
  \dif x \dif y
  .
  \]
\end{proposition}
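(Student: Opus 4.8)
The plan is to realise $u$ as the boundary trace of an extension $U$ to the half-space $\Rset^{n+1}_+$, to split the gradient $\nabla U$ into $\ell$ vector fields $\Theta_1,\dots,\Theta_\ell$ carrying controlled weighted energies, and to reconstruct from the $\Theta_k$ the summands $u_1,\dots,u_\ell$. We may assume the right-hand side is finite. Fix $\eta\in C_c^\infty(\Rset^n)$, nonnegative, supported in the unit ball, with $\int_{\Rset^n}\eta=1$, write $\eta_t(z)\defeq t^{-n}\eta(z/t)$, and set $U(x,t)\defeq\int_{\Rset^n}\eta_t(x-y)\,u(y)\dif y$ for $(x,t)\in\Rset^{n+1}_+$. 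Then $U$ is smooth on $\Rset^{n+1}_+$ with trace $u$ on $\Rset^n\times\{0\}$, and differentiating under the integral and using $\int\eta=1$ yields
\[
  \nabla U(x,t)=\int_{\Rset^n}\Psi_t(x-y)\bigl(u(y)-u(x)\bigr)\dif y,
\]
where $\Psi_t(z)=t^{-n-1}\Psi(z/t)$ for a fixed $\Psi\in C_c^\infty(\Rset^n,\Rset^{n+1})$ supported in the unit ball with zero-mean components; in particular $\Psi_t(z)=0$ for $\abs z>t$ and $\abs{\Psi_t(z)}\le\norm{\Psi}_{L^\infty}t^{-n-1}$.

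For the decomposition, fix $(x,t)\in\Rset^{n+1}_+$ and partition $\Rset^n$ (the $y$-variable) into the measurable sets $E_k(x,t)$ on which the index $k$ realises $\min_{1\le i\le\ell}\abs{u(y)-u(x)}^{p_i}t^{-s_ip_i}$ (breaking ties by the least index), and set $\Theta_k(x,t)\defeq\int_{E_k(x,t)}\Psi_t(x-y)(u(y)-u(x))\dif y$, so that $\sum_k\Theta_k=\nabla U$. Jensen's inequality for $r\mapsto r^{p_k}$, applied with the measure proportional to $\abs{\Psi_t(x-\cdot)}$ restricted to $E_k(x,t)$ (of total mass $\le\norm{\Psi}_{L^1}t^{-1}$), gives after bookkeeping the powers of $t$
\[
  \frac{\abs{\Theta_k(x,t)}^{p_k}}{t^{1-(1-s_k)p_k}}
  \le C\int_{E_k(x,t)}\abs{\Psi_t(x-y)}\,\min_{1\le i\le\ell}\frac{\abs{u(y)-u(x)}^{p_i}}{t^{s_ip_i}}\dif y,
\]
where the minimum appears because of the defining property of $E_k(x,t)$. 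Summing over $k$ (the $E_k(x,t)$ partition $\Rset^n$) and integrating over $\Rset^{n+1}_+$ by Tonelli, it remains to bound, for fixed $x\ne y$, the quantity $\int_0^\infty\abs{\Psi_t(x-y)}\min_i\abs{u(x)-u(y)}^{p_i}t^{-s_ip_i}\dif t$. Freezing an index $i_0=i_0(x,y)$ realising $\min_i\abs{u(x)-u(y)}^{p_i}\abs{x-y}^{-s_ip_i}$ and using that $\Psi_t(x-y)$ vanishes for $t<\abs{x-y}$, this is $\le\norm{\Psi}_{L^\infty}\abs{u(x)-u(y)}^{p_{i_0}}\int_{\abs{x-y}}^\infty t^{-n-1-s_{i_0}p_{i_0}}\dif t$, comparable to $\abs{u(x)-u(y)}^{p_{i_0}}\abs{x-y}^{-n-s_{i_0}p_{i_0}}=\min_i\abs{u(x)-u(y)}^{p_i}\abs{x-y}^{-n-s_ip_i}$ by the choice of $i_0$. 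This yields
\[
  \sum_{k=1}^\ell\iint_{\Rset^{n+1}_+}\frac{\abs{\Theta_k(x,t)}^{p_k}}{t^{1-(1-s_k)p_k}}\dif t\dif x
  \le C\iint_{\Rset^n\times\Rset^n}\min_{1\le i\le\ell}\frac{\abs{u(x)-u(y)}^{p_i}}{\abs{x-y}^{n+s_ip_i}}\dif x\dif y ,
\]
which simultaneously realises the extension estimate and the decomposition of $\nabla U$ announced in the introduction.

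It remains to reconstruct $u_1,\dots,u_\ell$ from the $\Theta_k$. Since $\sum_k\Theta_k^{n+1}=\partial_tU$, the vertical components already force $\sum_k u_k=u$ once one sets $u_k(x)\defeq-\int_0^\infty\Theta_k^{n+1}(x,t)\dif t$, so the remaining and crucial point is the weighted trace estimate $\abs{u_k}_{W^{s_k,p_k}(\Rset^n)}^{p_k}\le C\iint_{\Rset^{n+1}_+}\abs{\Theta_k}^{p_k}t^{-(1-(1-s_k)p_k)}\dif t\dif x$, which upon summing over $k$ and invoking the displayed inequality concludes the proof. I expect this to be the main obstacle: in the bridge decomposition of $u_k(x)-u_k(y)$ at height $\abs{x-y}$, the contribution of large $t$ is $\int_{\abs{x-y}}^\infty[\Theta_k^{n+1}(y,t)-\Theta_k^{n+1}(x,t)]\dif t$, which for a gradient one would replace by a horizontal line integral of the horizontal components, but the $\Theta_k$ are not gradients and the tail of $\Theta_k^{n+1}$ alone cannot be absorbed by the singular weight. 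The fix is to let the horizontal components genuinely participate: replacing $u(x)$ by its mollification $U(x,t)$ in the definition of the sets $E_k(x,t)$ makes each $\Theta_k(\cdot,t)$ vary smoothly at scale $t$, so that the classical trace estimate for weighted Sobolev spaces (the trace characterisation recalled in the introduction) applies, at the cost of one extra, lower-order mollification-error term controlled exactly as in the second paragraph; alternatively one projects $\Theta_k$ onto gradients by a Helmholtz decomposition, bounded on the weighted $L^{p_k}$ spaces since $t^{(1-s_k)p_k-1}$ lies in the Muckenhoupt class $A_{p_k}$ of $\Rset^{n+1}$ whenever $s_k\in(0,1)$ and $p_k\in(1,\infty)$, the endpoint $p_k=1$ being handled separately.

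A routine reduction — density of compactly supported functions, lower semicontinuity of the functionals involved, and the fact that additive constants have vanishing Gagliardo seminorm — takes care of the behaviour of $U$ as $t\to\infty$ and upgrades the identity $\sum_k u_k=u$ from "modulo a constant" to the exact statement, completing the argument.
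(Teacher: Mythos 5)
Your decomposition of $\nabla U$ is a valid alternative to the paper's. You split the convolution integral in the $y$-variable into $E_k(x,t)$, whereas the paper splits $\Rset^{n+1}_+$ into sets $A_i$ where $\abs{\nabla U(x,t)}^{p_i}/t^{1-(1-s_i)p_i}$ is minimal and sets $\Theta_i \defeq \mathbf{1}_{A_i}\nabla U$; both arrive at the same weighted estimate $\sum_k \iint \abs{\Theta_k}^{p_k}/t^{1-(1-s_k)p_k} \lesssim \iint \min_i \abs{u(x)-u(y)}^{p_i}/\abs{x-y}^{n+s_ip_i}$. The paper's split is somewhat cleaner (no Jensen step is needed at that stage, it is used once, in the analogue of your extension estimate), but your version works.

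The genuine gap is exactly the obstacle you flag and then leave unresolved: the weighted trace estimate $\abs{u_k}_{W^{s_k,p_k}}^{p_k} \le C\iint_{\Rset^{n+1}_+}\abs{\Theta_k}^{p_k}t^{(1-s_k)p_k-1}\dif t\dif x$ for a $\Theta_k$ that is not a gradient. Setting $u_k(x)\defeq -\int_0^\infty \Theta_k^{n+1}(x,t)\dif t$ and trying to estimate $u_k(x)-u_k(y)$ indeed fails, and neither of your two proposed repairs closes it as stated. The first repair — replacing $u(x)$ by $U(x,t)$ in the definition of $E_k(x,t)$ — does not make $\Theta_k(\cdot,t)$ vary smoothly at scale $t$: the sets $E_k(x,t)$ still jump non-smoothly in $x$, so the resulting $\Theta_k$ are no more regular than before, and the ``classical trace estimate'' you want to invoke is precisely the statement that needs proving for such rough vector fields. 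The second repair — Helmholtz projection via Riesz transforms with Muckenhoupt $A_{p_k}$ weights — is exactly the alternative the paper records in a remark after its proof, but, as you note, it requires $p_k>1$, and you give no separate argument for the endpoint $p_k=1$; the paper's main line of argument is designed to cover $p_k\ge 1$ uniformly.

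What the paper actually does to close this gap, and what is missing from your proposal, is the introduction of a \emph{reconstruction kernel} $\psi:\Rset^n\to\Rset^{n+1}$ (a compactly supported $C^1$ field satisfying a divergence identity and $\int\psi_t=1$) together with the identity $u(x)=-\iint_{\Rset^{n+1}_+}\psi\bigl(\frac{y-x}{t}\bigr)\cdot\frac{\nabla U(y,t)}{t^n}\dif t\dif y$. One then sets $u_i(x)\defeq-\iint\psi\bigl(\frac{y-x}{t}\bigr)\cdot\frac{\Theta_i(y,t)}{t^n}\dif t\dif y$, which spatially averages $\Theta_i$ at scale $t$ and uses all $n+1$ components, not just the vertical one. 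The crucial estimate is then a trace inequality for this reconstruction operator applied to an arbitrary measurable $\Theta$, proved by splitting the $t$-integral at $\abs{x-y}$ and applying the two Hardy inequalities at $0$ and at $\infty$; this is what gives the bound for every $p\ge 1$. That lemma is the heart of the proof and is not supplied by your argument. Finally, your closing paragraph (``routine reduction: density, lower semicontinuity, constants'') glosses over a non-trivial step: the reconstruction formula requires $\int_{\Rset^n}\abs{u(x)}/(1+\abs{x}^n)\dif x<\infty$, and the passage to general $u\in L^1_{\mathrm{loc}}$ in the paper is carried out via a specific truncation lemma for $\eta u$ under a zero-mean assumption, followed by a compactness/Fatou argument along a diverging sequence of radii; a bare appeal to density does not obviously suffice because there is no Banach-space topology on the right-hand side quantity.
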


Our first tool is the following Jensen type inequality for minima.

\begin{lemma}
  \label{lemma_Holder_min}
Let \(p_1, \dotsc, p_\ell \in [1, +\infty)\), let \(\mu\) be a probability measure on \(\Omega\) and let \(f : \Omega \to [0, +\infty)\). If \(f\) is \(\mu\)--measurable, then 
\[
\min_{1 \le i \le \ell} \alpha_i \, \biggl( \frac{1}{\ell} \int_{\Omega}  f \dif \mu \biggr)^{p_i}
\le 
\int_{\Omega} 
\min_{1 \le i \le \ell} \alpha_i f^{p_i}.
\]

\end{lemma}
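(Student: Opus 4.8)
The plan is to reduce the inequality to a single index, by splitting $\Omega$ according to which of the $\ell$ terms attains the minimum pointwise, and then to invoke the classical Jensen inequality on the sub-probability piece on which this happens.

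After discarding the trivial cases (if some $\alpha_i = 0$ the left-hand side vanishes, and if $f \notin L^1 (\mu)$ then, by the partition below, some piece carries infinite mass of $f$ and the right-hand side is $+\infty$), I may assume that every $\alpha_i > 0$ and that $M \defeq \int_\Omega f \dif \mu$ is finite. Set $\Omega_j \defeq \{ x \in \Omega \st \alpha_j f (x)^{p_j} \le \alpha_i f (x)^{p_i} \text{ for all } i \in \{1, \dotsc, \ell\}\}$ and $\Omega_j' \defeq \Omega_j \setminus \bigcup_{i < j} \Omega_i$; these sets are $\mu$--measurable, pairwise disjoint, cover $\Omega$, and satisfy $\min_{1 \le i \le \ell} \alpha_i f^{p_i} = \alpha_j f^{p_j}$ on $\Omega_j'$. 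Since $\sum_{j = 1}^\ell \int_{\Omega_j'} f \dif \mu = M$, the pigeonhole principle yields an index $k$ with $\int_{\Omega_k'} f \dif \mu \ge M/\ell$; in particular $\mu (\Omega_k') > 0$ as soon as $M > 0$.

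It then remains to restrict the integral to $\Omega_k'$ and apply Jensen's inequality to the convex function $t \mapsto t^{p_k}$ with respect to the probability measure $\mu (\Omega_k')^{-1}\, \mu|_{\Omega_k'}$, which, together with $p_k \ge 1$ and $\mu (\Omega_k') \le 1$, gives
\[
\int_{\Omega} \min_{1 \le i \le \ell} \alpha_i f^{p_i} \dif \mu
\ge \int_{\Omega_k'} \alpha_k f^{p_k} \dif \mu
\ge \alpha_k\, \mu (\Omega_k')^{1 - p_k} \Bigl( \int_{\Omega_k'} f \dif \mu \Bigr)^{p_k}
\ge \alpha_k \Bigl( \frac{M}{\ell} \Bigr)^{p_k}
\ge \min_{1 \le i \le \ell} \alpha_i \Bigl( \frac{M}{\ell} \Bigr)^{p_i},
\]
which is the claim. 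The argument has no genuine obstacle; the only points demanding care are the measurable selection giving the disjoint partition $\{\Omega_j'\}$ and checking that the Jensen step tolerates a set of total mass strictly below $1$ — which it does, in fact favourably, precisely because $p_k \ge 1$ — together with the bookkeeping of the degenerate cases $\alpha_i = 0$ and $M \in \{0, +\infty\}$.
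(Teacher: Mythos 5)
Your argument is correct and is essentially the paper's own proof: both partition $\Omega$ according to which index attains the pointwise minimum, use the pigeonhole principle to find an index $k$ with $\int_{\Omega_k'} f \dif\mu \ge \frac{1}{\ell}\int_\Omega f \dif\mu$, and then apply Jensen's inequality for $t \mapsto t^{p_k}$ on that piece (the paper applies Jensen directly over the full probability measure to $f\mathbf{1}_{A_k}$, which avoids the normalization factor $\mu(\Omega_k')^{1-p_k}$, but the effect is identical since $p_k \ge 1$). Your extra care with disjointification and the degenerate cases $\alpha_i = 0$, $M \in \{0,+\infty\}$ is harmless and not needed for the inequality as stated.
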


\begin{proof}
  We define for every \(j \in \{1, \dotsc, \ell\}\), the set
\[
A_j = \bigl\{ x \in \Omega \st \alpha_j \abs{f (x)}^{p_j} = \min_{1 \le i \le \ell}
\alpha_i \abs{f (x)}^{p_i}\bigr\}.
\]
By definition, we have \(\bigcup_{j = 1}^{\ell} A_j = \Omega\), and thus there exists \(j \in \{1, \dotsc, \ell\}\) such that 
\[
\int_{\Omega} f \dif \mu
\le \ell  
\int_{A_j} f \dif \mu.
\]
Since \(\mu\) is a probability measure and \(p_j \ge 1\), we have by Jensen's inequality, 
\[
\Biggl(\int_{A_j} f \dif \mu \Biggr)^{p_j}
\le 
\int_{A_j} f^{p_j} \dif \mu,
\]
and thus 
\[
  \alpha_j \Biggl(\frac{1}{\ell} \int_{\Omega} f \dif \mu \Biggr)^{p_j}
  \le \alpha_j\Biggl( \int_{A_j} f \dif \mu\Biggr)^{p_j}
\le  \int_{A_j} \alpha_j f^{p_j} \le \int_{\Omega} \min_{1 \le i \le \ell} \alpha_i f^{p_i},
\]
and the conclusion follows.
\end{proof}

\begin{remark}
  \Cref{lemma_Holder_min} can also be proved by Jensen's inequality applied to a suitable inf-convolution:
  one defines the function \(\Phi : \Rset \to [0, +\infty)\) for each \(t \in \Rset\) by 
  \[
  \Phi (t) = 
  \,
  \inf \bigl\{ \alpha_1 \abs{t_1}^{p_1} + \dotsb + \alpha_\ell \abs{t_\ell}^{p_\ell}
  \st t_1, \dotsc, t_\ell \in \Rset \text{ and } t_1 + \dotsb + t_\ell = t\bigr\},
  \]
  one observes that the function \(\Phi\) is convex since \(p_1, \dotsc, p_\ell \ge 1\) and \(\alpha_1, \dotsc, \alpha_\ell \ge 0\);
  hence Jensen's inequality with \(\Phi\) and \(\mu\) applies to \(f\); one concludes by noting that by definition of \(\Phi\), for each \(t \in \Rset\),
  \[
  \min_{1 \le i \le \ell} \frac{\alpha_i \abs{t}^{p_i}}{\ell^{p_i}}
  \le 
  \Phi (t) \le \min_{1 \le i \le \ell} \alpha_i \abs{t}^{p_i}.
  \]
  
\end{remark}

Next we have an extension inequality with an estimate on a minimum of derivatives.

\begin{lemma}
  \label{lemma_extension_half_space}
  Let \(n \in \Nset^*\), let \(\ell \in \Nset^*\), let \(s_1, \dotsc, s_\ell \in (0, 1)\) and let \(p_1, \dotsc, p_\ell \in [1, + \infty)\).
  Assume that \(\varphi \in C^1_c (\Rset^n)\), \(\supp \varphi \subset B (0, 1)\) and \(\int_{\Rset^n} \varphi = 1\).
  There exists a constant \(C \in \Rset\) such that 
  for every \(u \in L^1_\mathrm{loc} (\Rset^n, \Rset)\), the function \(U : \Rset^{n + 1}_+ \to \Rset\) defined  for each \((x, t) \in \Rset^n \times (0, +\infty)
  \simeq \Rset^{n +1}_+\) by 
\[
  U (x, t) \defeq \int_{\Rset^n} u (x - th) \varphi (h) \dif h. 
\]
satisfies 
\[
\iint\limits_{\Rset^{n + 1}_+}
\min_{1 \le i \le \ell} \frac{\abs{\nabla U (x, t)}^{p_i}}{t^{1 - (1 - s) p_i}} \dif t \dif x
\le 
C
\iint\limits_{\Rset^n \times \Rset^n}
\min_{1 \le i \le \ell} \frac{\abs{u (x) - u (y)}^{p_i}}{\abs{x - y}^{n + s_i p_i}} \dif x \dif y.
\]
\end{lemma}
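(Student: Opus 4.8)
The plan is to combine the convolution structure of the extension $U$ with the Jensen-type inequality for minima of \cref{lemma_Holder_min}. First I would rewrite the extension: performing the change of variables $y = x - t h$, one has, for $(x, t) \in \Rset^n \times (0, +\infty)$,
\[
  U(x, t) = \frac{1}{t^n} \int_{\Rset^n} u(y) \, \varphi\Bigl( \frac{x - y}{t} \Bigr) \dif y ,
\]
and since $\varphi \in C^1_c(\Rset^n)$ and $u \in L^1_{\mathrm{loc}}(\Rset^n)$ this defines a function $U$ of class $C^1$ on $\Rset^{n + 1}_+$. Differentiating under the integral sign, and using that $\int_{\Rset^n} \nabla \varphi = 0$ and $\int_{\Rset^n} \dive(z \varphi(z)) \dif z = 0$ (both consequences of $\varphi$ having compact support) in order to subtract $u(x)$, one obtains
\[
  \nabla_x U(x, t) = \frac{1}{t^{n + 1}} \int_{\Rset^n} \bigl( u(y) - u(x) \bigr) (\nabla\varphi)\Bigl( \frac{x - y}{t} \Bigr) \dif y
  \quad \text{and} \quad
  \partial_t U(x, t) = - \frac{1}{t^{n + 1}} \int_{\Rset^n} \bigl( u(y) - u(x) \bigr) \psi\Bigl( \frac{x - y}{t} \Bigr) \dif y ,
\]
where $\psi(z) \defeq \dive(z \varphi(z)) = n \varphi(z) + z \cdot \nabla \varphi(z)$. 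As $\nabla \varphi$ and $\psi$ are bounded and supported in $\overline{B(0, 1)}$, there is a constant $C_1$, depending only on $n$ and $\varphi$, such that for every $(x, t) \in \Rset^{n + 1}_+$,
\[
  \abs{\nabla U(x, t)} \le \frac{C_1}{t} \, \frac{1}{\abs{B(x, t)}} \int_{B(x, t)} \abs{u(y) - u(x)} \dif y .
\]

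Next, for each fixed $(x, t) \in \Rset^{n + 1}_+$ I would apply \cref{lemma_Holder_min} on the probability space $B(x, t)$ endowed with the normalized Lebesgue measure $\mu$, to the function $f(y) \defeq \abs{u(y) - u(x)}$ and to the constants $\alpha_i \defeq (C_1 \ell)^{p_i} / t^{1 + s_i p_i}$. Since $p_i + 1 - (1 - s_i) p_i = 1 + s_i p_i$, the oscillation bound above reads $\abs{\nabla U(x, t)}^{p_i} / t^{1 - (1 - s_i) p_i} \le \alpha_i \bigl( \tfrac{1}{\ell} \int_{B(x, t)} f \dif \mu \bigr)^{p_i}$ for each $i$; taking the minimum over $i$, invoking the lemma, and recalling $\abs{B(x, t)} = \omega_n t^n$, this yields
\[
  \min_{1 \le i \le \ell} \frac{\abs{\nabla U(x, t)}^{p_i}}{t^{1 - (1 - s_i) p_i}}
  \le \int_{B(x, t)} \min_{1 \le i \le \ell} \alpha_i f(y)^{p_i} \dif \mu(y)
  = \frac{1}{\omega_n} \int_{B(x, t)} \min_{1 \le i \le \ell} \frac{(C_1 \ell)^{p_i} \abs{u(x) - u(y)}^{p_i}}{t^{n + 1 + s_i p_i}} \dif y .
\]
Integrating over $(x, t) \in \Rset^{n + 1}_+$ and applying Fubini's theorem on the set $\{ (x, y, t) : x, y \in \Rset^n,\ t > \abs{x - y} \}$ — on which the constraint $y \in B(x, t)$ is precisely $t > \abs{x - y}$ — the right-hand side becomes $\tfrac{1}{\omega_n} \iint_{\Rset^n \times \Rset^n} \bigl( \int_{\abs{x - y}}^{+ \infty} \min_{1 \le i \le \ell} (C_1 \ell)^{p_i} \abs{u(x) - u(y)}^{p_i} t^{-(n + 1 + s_i p_i)} \dif t \bigr) \dif x \dif y$. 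For fixed $x \ne y$, bounding the minimum by its $j$-th term, using $\int_{\abs{x - y}}^{+ \infty} t^{-(n + 1 + s_j p_j)} \dif t \le \abs{x - y}^{-(n + s_j p_j)} / n$, and then taking the minimum over $j$, this inner integral is at most $\tfrac{1}{n} \min_{1 \le j \le \ell} (C_1 \ell)^{p_j} \abs{u(x) - u(y)}^{p_j} / \abs{x - y}^{n + s_j p_j}$. Since $\min_j \beta_j b_j \le (\max_j \beta_j)(\min_j b_j)$ for nonnegative reals $\beta_j$, $b_j$, the claimed estimate follows with $C \defeq \max_{1 \le j \le \ell} (C_1 \ell)^{p_j} / (n \omega_n)$; as usual one may assume the right-hand side finite, the inequality being trivial otherwise.

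The computations are routine; the one point deserving attention is the bookkeeping of the powers of $t$, and this is exactly what dictates the weight $t^{1 - (1 - s_i) p_i}$: the factor $t^{-p_i}$ coming out of the gradient, the exponent of the weight, and the factor $t^{-n}$ arising when one passes to the average over $B(x, t)$ must add up so that integrating $t$ over $(\abs{x - y}, + \infty)$ reproduces precisely the Gagliardo kernel $\abs{x - y}^{-(n + s_i p_i)}$. Beyond that there is no real obstacle; the well-definedness and $C^1$ regularity of $U$ on $\Rset^{n+1}_+$ under the assumption $u \in L^1_{\mathrm{loc}}(\Rset^n)$ are standard.
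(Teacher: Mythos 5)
Your proof is correct and follows essentially the same route as the paper's: rewrite $U$ as a rescaled convolution, bound $\abs{\nabla U(x,t)}$ by $\tfrac{C}{t}\fint_{B(x,t)}\abs{u(y)-u(x)}\dif y$ after observing that the relevant kernels have zero integral, apply \cref{lemma_Holder_min} pointwise in $(x,t)$, integrate and use Fubini, and finish with the estimate on $\int_{\abs{x-y}}^{+\infty}\min_i\beta_i t^{-(n+1+s_ip_i)}\dif t$. The only cosmetic differences are that you take $\alpha_i=(C_1\ell)^{p_i}/t^{1+s_ip_i}$ so the Jensen step comes out exactly (arguably cleaner than the paper's stated choice, which requires absorbing the factor $t^{-p_i}$ from the gradient bound into the constant), and that you reprove the content of \cref{lemma_int_min_powers} inline rather than citing it.
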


The function \(U\) is the convolution product of \(u\) with a family of rescaled functions, with the scaling parameter as the last variable. Indeed, one has for each \((x, t) \in \Rset^{n + 1}\),
\(
U (x, t) = (u\ast \varphi_t) (x)\), where for \(t \in (0, +\infty)\), the function \(\varphi_t : \Rset^n \to \Rset\) is defined for \(y \in \Rset^n\) by  \(\varphi_t (y) \defeq \frac{1}{t^n}\varphi (y/t)\).

The constant \(C\) in \cref{lemma_extension_half_space} remains bounded when \(\ell\) and \(p_1, \dotsc, p_\ell \in [1, +\infty)\) remain bounded.

\begin{proof}%
  [Proof of \cref{lemma_extension_half_space}]
  \resetconstant
  For every \((x, t) \in \Rset^{n +1}\), we have by a change of variable \(y = x - th\),
  \[
  U (x, t) = \frac{1}{t^n} \int_{\Rset^n} u (y) \varphi \Bigl(\frac{x - y}{t}\Bigr) \dif y.
  \]
    We define the function \(\xi \in C_c (\Rset^n, \Rset^{n + 1})\) for each \(x \in \Rset^n\) by \(\xi (x) \defeq 
  (\nabla \varphi (x), n \varphi (x) - \nabla \varphi (x) \cdot x )\), and we write 
  for every \((x, t) \in \Rset^{n + 1}_+\), 
  \[
  \begin{split}
   \nabla U (x, t) & =
   \frac{1}{t^{n + 1}} 
   \int_{\Rset^n} u (y) \,\xi \Bigl(\frac{x - y}{t}\Bigr) \dif y\\
   &=\frac{1}{t^{n + 1}} 
   \int_{\Rset^n} (u (y) - u (x)) \, \xi \Bigl(\frac{x - y}{t}\Bigr) \dif y,
 \end{split}
  \]
  since \(\int_{\Rset^n} \xi = 0\), and thus for each \((x, t) \in \Rset^{n + 1}_+\), 
  \[
  \abs{\nabla U (x, t)}
  \le \frac{\C}{t} 
  \fint_{B (x, t)} \abs{u (y) - u (x)} \dif y.
  \]

    We apply \cref{lemma_Holder_min} with \(\alpha_i \defeq 1/t^{1 - (1 - s_i)p_i}\), \(\mu\) the normalized Lebesgue measure on the ball \(B (x,t)\)  and the function \(f : \Rset^n \to \Rset\) defined for each \(y \in \Rset^n\) by \(f (y) \defeq \abs{u (y) - u (x)}\), and we get for each \((x, t) \in \Rset^n \times (0, +\infty)\),
  \begin{equation}
    \label{eq_aeb3Eefeenaizeih6ix}
    \min_{1 \le i \le \ell} \frac{\abs{\nabla U (x, t)}^{p_i}}{t^{1 - (1 - s_i)p_i}}
    \le
    \Cl{cst_oheegu6eik0Eesi7eoS}
    \int_{B (x, t)} \min_{1 \le i \le \ell} \frac{\abs{u (x) - u (y)}^{p_i}}{t^{n + s_i p_i + 1}}
    \dif y.
  \end{equation}
  By integration of the inequality \eqref{eq_aeb3Eefeenaizeih6ix} with respect to \(x\) and \(t\) and by Fubini's theorem, we get 
  \[
  \iint\limits_{\Rset^{n + 1}_+}
    \min_{1 \le i \le \ell} 
    \frac{\abs{\nabla U (x, t)}^{p_i}}{t^{1 - (1 - s_i)p_i}}
    \dif t \dif x
    \le 
    \Cr{cst_oheegu6eik0Eesi7eoS}
    \int_{\Rset^n} \int_{\Rset^n} \int_{\abs{x - y}}^{+\infty}
    \min_{1 \le i \le \ell} \frac{\abs{u (x) - u (y)}^{p_i}}{t^{n + s_i p_i + 1}}
    \dif t 
    \dif y
    \dif x.
  \]
  We conclude by \cref{lemma_int_min_powers} below.
\end{proof}

\begin{lemma}
  \label{lemma_int_min_powers}
  Let \(\gamma_1, \dotsc, \gamma_\ell \in (0, + \infty)\)
  and \(\beta_1, \dotsc, \beta_\ell \in [0, +\infty)\). For every \(r \in \Rset\), one has
  \[
  \int_{r}^{+\infty} \min_{1 \le i \le \ell}
  \frac{\beta_i}{t^{\gamma_i + 1}} \dif t
  \le \min_{1 \le i \le \ell} \frac{\beta_i}{\gamma_i r^{\gamma_i}}
  \]
  and 
  \[
  \int_{0}^{r} \min_{1 \le i \le \ell}
  {\beta_i}{t^{\gamma_i - 1}} \dif t
  \le \min_{1 \le i \le \ell} \frac{\beta_i r^{\gamma_i}}{\gamma_i}.
  \]
\end{lemma}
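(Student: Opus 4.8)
The plan is to prove both estimates by the same elementary three-step device. Fix an index $j \in \{1, \dotsc, \ell\}$; then the minimum of the integrands is bounded above, pointwise in $t$, by the single $j$-th term. That $j$-th term is then integrated explicitly, which is possible precisely because $\gamma_j > 0$: this makes $t \mapsto t^{-\gamma_j - 1}$ integrable near $+\infty$ for the first estimate and $t \mapsto t^{\gamma_j - 1}$ integrable near $0$ for the second, with primitives $-t^{-\gamma_j}/\gamma_j$ and $t^{\gamma_j}/\gamma_j$ respectively. Finally, since the bound so obtained depends on the chosen index $j$, one takes the minimum over $j$.

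Concretely, for the first estimate I would take $r > 0$ — this is the content of the statement, and in the application to \cref{lemma_extension_half_space} one indeed has $r = \abs{x - y} \ge 0$ with $r = 0$ only on a Lebesgue-null set — and observe that, for each $j \in \{1, \dotsc, \ell\}$ and each $t \in (r, +\infty)$,
\[
  \min_{1 \le i \le \ell} \frac{\beta_i}{t^{\gamma_i + 1}} \le \frac{\beta_j}{t^{\gamma_j + 1}},
\]
so that
\[
  \int_r^{+\infty} \min_{1 \le i \le \ell} \frac{\beta_i}{t^{\gamma_i + 1}} \dif t
  \le \int_r^{+\infty} \frac{\beta_j}{t^{\gamma_j + 1}} \dif t
  = \frac{\beta_j}{\gamma_j r^{\gamma_j}};
\]
minimizing the right-hand side over $j \in \{1, \dotsc, \ell\}$ yields the first claim. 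For the second estimate the argument is word for word the same on the interval $(0, r)$, using $\min_{1 \le i \le \ell} \beta_i t^{\gamma_i - 1} \le \beta_j t^{\gamma_j - 1}$ and $\int_0^r \beta_j t^{\gamma_j - 1} \dif t = \beta_j r^{\gamma_j}/\gamma_j$.

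I do not expect a genuine obstacle. The only points deserving a word are that each integrand is measurable — being a minimum of finitely many continuous functions of $t$, it is continuous — and integrable on the relevant half-line, being dominated there by any one of the terms whose integral has just been computed. It is worth stressing that the two inequalities are sharp, with equality when $\ell = 1$, so that invoking the lemma entails no loss in the constants; this is exactly what is needed so that the bound in \cref{lemma_extension_half_space}, and hence that in \cref{proposition_decomposition_Rn}, does not degrade.
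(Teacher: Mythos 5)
Your proof is correct and is essentially the paper's proof: both compare the integrand pointwise with a single term $\beta_j/t^{\gamma_j+1}$, integrate that term exactly, and then optimize over $j$ — you integrate first and minimize at the end, while the paper chooses the minimizing $j$ up front, but these are the same argument. Your observation that the statement really requires $r>0$ (the paper's own proof quietly fixes $r \in [0,+\infty)$) and that this suffices for the application in \cref{lemma_extension_half_space} is accurate.
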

\begin{proof}
We fix \(r \in [0, +\infty)\) and we choose \(j \in \{1, \dotsc, \ell\}\) such that 
  \[
  \frac{\beta_j}{\gamma_j r^{\gamma_j}} = \min_{1 \le i \le \ell} \frac{\beta_i}{\gamma_i r^{\gamma_i}}.
  \]
We then have 
\[
\frac{\beta_j}{\gamma_j r^{\gamma_j}} = 
\int_{r}^{+\infty} \frac{\beta_j}{t^{\gamma_j + 1}} \dif t
\ge \int_r^{+\infty} \min_{1 \le i \le \ell}
\frac{\beta_i}{t^{\gamma_i + 1}} \dif t.
\]
The proof of the second inequality is similar.
\end{proof}

  From \cref{lemma_extension_half_space}, the function \(U\) can be decomposed as  \(\nabla U = \Theta_1 + \dotsb + \Theta_\ell\), with 
\[
\sum_{i = 1}^\ell
\iint\limits_{\Rset^{n + 1}_+}
\frac{\abs{\Theta_i (x, t)}^{p_i}}{t^{1 - (1 - s_i)p_i}} \dif t \dif x 
\le 
\iint\limits_{\Rset^{n + 1}_+}
\min_{1 \le i \le \ell} \frac{\abs{\nabla U(x,t)}^{p_i}}{t^{1 - (1 - s) p_i}} \dif t \dif x.
\]
(see \eqref{eq_reconstruction} and  \eqref{eq_decomposition_theta}  below).
In the following we show how to construct a function $u_i$ from the vector field $\Theta_i$ with an estimate of the Gagliardo semi-norms.

\begin{definition}
  \label{def_reconstruction}
The function \(\psi : \Rset^n \to \Rset^{n + 1}\) is a \emph{reconstruction kernel} whenever \(\psi \in C^1_c (\Rset^n, \Rset^{n + 1})\), \(\supp \psi \subset B (0, 1)\), for every \(x \in \Rset^n\)
\begin{equation*}
\operatorname{div} (\psi_x) (x)
  + x \cdot \nabla\psi_t (x) + n \psi_t (x) = 0
\end{equation*}
and 
\[
\int_{\Rset^n} \psi_t  = 1,
\]
where \(\psi = (\psi_x, \psi_t)\) with \(\psi_x : \Rset^n \to \Rset^n\) and \(\psi_t : \Rset^n \to \Rset\).
\end{definition}

For example, if \(\varphi \in C^1_c (\Rset^n)\), if \(\int_{\Rset^n} \varphi=1\) and if \(\supp \varphi \subset B (0, 1)\), then the function \(\psi : \Rset^n \to \Rset^{n + 1}\) defined for each \(x \in \Rset^n\) by \(\psi (x) \defeq \varphi (x)  (-x, 1)\) is a reconstruction kernel.

\begin{lemma}
\label{prop_reconstruction}
Let \(\psi :\Rset^n \to \Rset^{n + 1}\) be a reconstruction kernel and let \(U \in C^1 (\Rset^{n + 1}_+)\).
For every \(\tau < T\) and every \(x \in \Rset^n\),
\begin{multline*}
\frac{1}{\tau^n}\int_{\Rset^n} \psi_t \Bigl(\frac{x - y}{\tau}\Bigr) U (y, t) \dif y \\
  = 
  \frac{1}{T^n} \int_{\Rset^n} \psi_t \Bigl(\frac{x - y}{T}\Bigr) U (y, t) \dif y 
-
\iint\limits_{\Rset^n \times [\tau, T]}  \psi \Bigl(\frac{x - y}{t}\Bigr)\cdot \frac{\nabla U (y,t)}{t^{n}} \dif t \dif y.
\end{multline*}
\end{lemma}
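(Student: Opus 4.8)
The plan is to view the asserted identity as the fundamental theorem of calculus applied to the function
\[
\Psi (t) \defeq \frac{1}{t^n}\int_{\Rset^n}\psi_t\Bigl(\frac{x - y}{t}\Bigr) U (y, t)\dif y, \qquad t \in (0, +\infty),
\]
for a fixed \(x \in \Rset^n\): the claimed formula is exactly \(\Psi (\tau) = \Psi (T) - \iint_{\Rset^n \times [\tau, T]}\psi (\tfrac{x - y}{t}) \cdot \tfrac{\nabla U (y, t)}{t^n}\dif t \dif y\), so by Fubini's theorem it suffices to prove that \(\Psi\) is of class \(C^1\) on \((0, +\infty)\) with
\[
\Psi' (t) = \int_{\Rset^n}\psi\Bigl(\frac{x - y}{t}\Bigr) \cdot \frac{\nabla U (y, t)}{t^n}\dif y.
\]
Since \(\psi \in C^1_c (\Rset^n, \Rset^{n + 1})\) with \(\supp \psi \subset B (0, 1)\), the integrand defining \(\Psi (t)\) is supported in \(\overline{B (x, t)}\), so all the integrals below run over compact sets and no integrability question arises.

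First I would perform the change of variables \(y = x - t h\), which removes the \(t\)-dependence from the measure and from the argument of \(\psi_t\), giving \(\Psi (t) = \int_{\Rset^n}\psi_t (h)\, U (x - t h, t)\dif h\). Because \(\psi_t\) is compactly supported and \(U \in C^1 (\Rset^{n + 1}_+)\), differentiation under the integral sign is legitimate uniformly on compact subintervals of \((0, +\infty)\) (which also gives \(\Psi \in C^1\)), and the chain rule yields
\[
\Psi' (t) = \int_{\Rset^n}\psi_t (h)\,\partial_t U (x - t h, t)\dif h - \int_{\Rset^n}\psi_t (h)\, h \cdot \nabla_y U (x - t h, t)\dif h.
\]
In the second integral I would use \(h \cdot \nabla_y U (x - t h, t) = -\tfrac{1}{t}\, h \cdot \nabla_h \bigl(U (x - t h, t)\bigr)\) and integrate by parts in \(h\) — there is no boundary term, \(\psi_t\) having compact support — to rewrite it as \(-\tfrac{1}{t}\int_{\Rset^n}\bigl(n\psi_t (h) + h \cdot \nabla \psi_t (h)\bigr) U (x - t h, t)\dif h\).

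This is where the defining property of a reconstruction kernel enters: by \cref{def_reconstruction}, \(n \psi_t (h) + h \cdot \nabla \psi_t (h) = -\operatorname{div} (\psi_x) (h)\) for every \(h \in \Rset^n\). Substituting and integrating by parts once more in \(h\), now using \(\nabla_h \bigl(U (x - t h, t)\bigr) = -t\, \nabla_y U (x - t h, t)\), turns the second integral into \(\int_{\Rset^n}\psi_x (h) \cdot \nabla_y U (x - t h, t)\dif h\). Adding the first integral and recalling \(\psi = (\psi_x, \psi_t)\) and \(\nabla U = (\nabla_y U, \partial_t U)\), we obtain \(\Psi' (t) = \int_{\Rset^n}\psi (h) \cdot \nabla U (x - t h, t)\dif h\); undoing the change of variables \(h = \frac{x - y}{t}\) gives the desired expression for \(\Psi'\), and integrating in \(t\) over \([\tau, T]\) and applying Fubini's theorem finishes the proof. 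The argument is essentially a computation, so the only steps needing care are the justification of differentiation under the integral sign — immediate from the compact support of \(\psi\) and \(U \in C^1\) — and the bookkeeping in the two integrations by parts; the single genuinely substantial point is the cancellation produced by the reconstruction-kernel identity, which is precisely why \cref{def_reconstruction} is phrased as it is.
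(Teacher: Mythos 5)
Your proof is correct, and it is mathematically equivalent to the paper's, but the two are organized differently. The paper applies the divergence theorem once in \(\Rset^{n+1}\) to the vector field \((y,t)\mapsto \psi\bigl(\frac{x-y}{t}\bigr)\,U(y,t)/t^{n}\) on the slab \(\Rset^n\times[\tau,T]\); the \((n+1)\)-dimensional divergence of that field produces in one stroke both the undesired bulk term (which is then annihilated by the reconstruction-kernel identity) and the boundary terms at \(t=\tau\) and \(t=T\). You instead change variables to \(h=(x-y)/t\) so that the kernel carries no \(t\)-dependence, differentiate \(\Psi\) under the integral sign, and perform two integrations by parts in \(h\) before concluding with the fundamental theorem of calculus. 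In effect, you unpack the paper's single invocation of the divergence theorem into its constituent one-variable steps; this costs an extra change of variables and an extra integration by parts, but it makes each cancellation explicit and easy to audit, and it isolates cleanly the one place where \cref{def_reconstruction} is used. The paper's version is shorter; yours is arguably more transparent. Either route is legitimate, and both put the reconstruction-kernel identity in the same, decisive, place.
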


\begin{proof}
Since the function \(U\) is smooth and \(\psi\) is compactly supported, we have by the divergence theorem 
\begin{multline}
  \iint\limits_{\Rset^{n}\times [\tau, T]}
  \biggl(
  \operatorname{div} (\psi_x) \Bigl(\frac{x - y}{t}\Bigr)+ \frac{x - y}{t} \cdot \nabla\psi_t \Bigl(\frac{x - y}{t}\Bigr) + n \psi_t  \Bigl(\frac{x -y}{t}\Bigr)
  \biggr) \frac{U (y, t)}{t^{n + 1}} \dif y \dif t\\
  -\iint\limits_{\Rset^{n}\times [\tau, T]} \psi \Bigl(\frac{x - y}{t}\Bigr) \cdot\frac{ \nabla U (y, t)}{t^n} \dif y \dif t \\
    =\frac{1}{\tau^n}\int_{\Rset^n} \psi_t \Bigl(\frac{x - y}{\tau}\Bigr) U (y, t) 
      - \frac{1}{T^n} \int_{\Rset^n} \psi_t \Bigl(\frac{x - y}{T}\Bigr) U (y, t) \dif y \dif t.
\end{multline}
The conclusion follows then from the definition of reconstruction kernel (\cref{def_reconstruction}).
\end{proof}

\begin{lemma}
  \label{prop_estimate_reconstruction}
  Let \(n \in \Nset^*\), let \(s \in (0, 1)\), let \(p \in [1, +\infty)\) and let \(\psi : \Rset^n \to \Rset^{n + 1}\) be a reconstruction kernel.
  There exists a constant \(C \in \Rset\) such that if the function \(\Theta : \Rset^{n + 1}_+ \to \Rset^{n + 1}\) is measurable and
  satisfies for almost every \(x \in \Rset^n\),
  \[
  \int_0^{+\infty} \int_{B (x, t)} \frac{\abs{\Theta (y, t)}}{t^{n}} \dif t \dif y
  < + \infty
  \]
and if the function \(v : \Rset^n \to \Rset\) is defined for almost every \(x \in \Rset^n\) by 
\begin{equation*}
  v(x)\defeq\iint\limits_{\Rset_+^{n + 1}}
  \psi \Bigl(\frac{x - y}{t}\Bigr) \cdot \frac{\Theta (y, t)}{t^n} \dif t \dif y,
\end{equation*} 
then
\begin{equation*}
  \iint\limits_{\Rset^n \times \Rset^n}
  \frac{\abs{v (x) - v (y)}^p}{\abs{x - y}^{n + s p}}
  \dif x \dif y
  \leq 
  C
  \iint\limits_{\Rset^{n + 1}_+}  \frac{\abs{\Theta (x, t)}^p}{t^{1 - (1 - s) p}} \dif t \dif x 
  .
\end{equation*}
\end{lemma}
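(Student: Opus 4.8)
The plan is to estimate the increment \(v(x) - v(x')\) directly from the formula defining \(v\) — this is a reverse–trace inequality, in the spirit of \cref{prop_reconstruction} where \(v\) plays the role of the boundary trace of a function whose gradient is \(\Theta\). Writing \(\rho \defeq \abs{x - x'}\), I would split the domain of integration in \((y, t) \in \Rset^{n + 1}_+\) according to whether \(t \ge \rho\) or \(t < \rho\). For \(t \ge \rho\), the two integrands \(\psi\bigl(\tfrac{x - y}{t}\bigr)\cdot\tfrac{\Theta(y, t)}{t^n}\) and \(\psi\bigl(\tfrac{x' - y}{t}\bigr)\cdot\tfrac{\Theta(y, t)}{t^n}\) vanish unless \(y \in B(x, t) \cup B(x', t) \subseteq B(x, 2t)\) (using \(\supp \psi \subseteq B(0, 1)\) and \(\rho \le t\)), and on that set the mean value inequality gives \(\bigl\lvert \psi\bigl(\tfrac{x - y}{t}\bigr) - \psi\bigl(\tfrac{x' - y}{t}\bigr)\bigr\rvert \le \norm{\nabla \psi}_{L^\infty}\,\tfrac{\rho}{t}\); for \(t < \rho\), I would bound the two contributions separately, using only \(\norm{\psi}_{L^\infty}\) and the support condition. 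Setting \(k(x, t) \defeq \tfrac{1}{t^n}\int_{B(x, t)} \abs{\Theta(y, t)} \dif y\) and \(K(x, t) \defeq \tfrac{1}{t^n}\int_{B(x, 2t)} \abs{\Theta(y, t)} \dif y\), this produces a pointwise bound of the shape
\[
\abs{v(x) - v(x')} \le C\Bigl( \rho \int_{\rho}^{+\infty} \frac{K(x, t)}{t}\dif t + \int_0^{\rho} k(x, t) \dif t + \int_0^{\rho} k(x', t) \dif t\Bigr).
\]

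Next I would raise this to the power \(p\), integrate over \((x, x') \in \Rset^n \times \Rset^n\), and reduce the \(x'\)–integral to a one–dimensional weighted integral by passing to polar coordinates in \(x' - x\): for every \(G \ge 0\) one has \(\int_{\Rset^n} \tfrac{G(\abs{x - x'})}{\abs{x - x'}^{n + sp}}\dif x' = c_n \int_0^{+\infty} \tfrac{G(r)}{r^{1 + sp}}\dif r\), where \(c_n\) is the measure of the unit sphere. By symmetry the two ``near'' terms contribute equally, so up to constants everything reduces, for each fixed \(x\), to estimating
\[
\int_0^{+\infty} \frac{1}{r^{1 + sp}}\Bigl(\int_0^r k(x, t) \dif t\Bigr)^p \dif r
\qquad\text{and}\qquad
\int_0^{+\infty} r^{p - 1 - sp}\Bigl(\int_r^{+\infty} \frac{K(x, t)}{t}\dif t\Bigr)^p \dif r.
\]
The first quantity is controlled by the classical Hardy inequality \(\int_0^{\infty} r^{-1 - sp}\bigl(\int_0^r f\bigr)^p \dif r \le s^{-p}\int_0^{\infty} f(t)^p\, t^{p - 1 - sp}\dif t\), valid because \(sp > 0\). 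The second is controlled by the dual Hardy inequality on the half–line, \(\int_0^{\infty} r^{b}\bigl(\int_r^{\infty} g\bigr)^p \dif r \le \bigl(\tfrac{p}{b + 1}\bigr)^p \int_0^{\infty} g(t)^p\, t^{b + p}\dif t\) with \(b = p - 1 - sp\), which is valid precisely because \(b > -1\), that is, because \(s < 1\); applied with \(g(t) = K(x, t)/t\) it gives the bound \((1 - s)^{-p}\int_0^{\infty} K(x, t)^p\, t^{p - 1 - sp}\dif t\). Since \(p - 1 - sp = -\bigl(1 - (1 - s)p\bigr)\), both right–hand sides carry exactly the weight \(t^{-(1 - (1 - s)p)}\).

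It then remains to integrate back in \(x\). Applying Jensen's inequality to the normalized Lebesgue measure on \(B(x, t)\), respectively \(B(x, 2t)\), yields \(k(x, t)^p \le C\, t^{-n}\int_{B(x, t)} \abs{\Theta(y, t)}^p \dif y\) and \(K(x, t)^p \le C\, t^{-n}\int_{B(x, 2t)} \abs{\Theta(y, t)}^p \dif y\); integrating in \(x\) and using Fubini's theorem gives \(\int_{\Rset^n} k(x, t)^p \dif x \le C\int_{\Rset^n} \abs{\Theta(y, t)}^p \dif y\), and likewise for \(K\). Assembling the pieces produces
\[
\iint\limits_{\Rset^n \times \Rset^n} \frac{\abs{v(x) - v(x')}^p}{\abs{x - x'}^{n + sp}}\dif x \dif x' \le C\iint\limits_{\Rset^{n + 1}_+} \frac{\abs{\Theta(x, t)}^p}{t^{1 - (1 - s)p}}\dif t \dif x,
\]
with \(C\) depending only on \(n\), \(s\), \(p\) and \(\psi\). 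All the integrands are nonnegative once absolute values are taken, so the interchanges of integration are legitimate by Tonelli's theorem, while the standing integrability assumption on \(\Theta\) guarantees that \(v\) is well defined.

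I expect the main obstacle to be the ``far'' part: one must invoke the Hardy inequality in its half–line form \(\int_r^{+\infty}\), rather than the familiar \(\int_0^r\) form, and keep careful track of the exponent so that the required condition becomes exactly \(s < 1\). This is also where the constant degenerates as \(s \to 1\) (through the factor \((1 - s)^{-p}\)), in agreement with the blow–up of the estimates announced for \cref{theorem_main}; the ``near'' part degenerates analogously as \(s \to 0\) through \(s^{-p}\).
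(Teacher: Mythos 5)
Your proposal is correct and follows essentially the same strategy as the paper: split the $(y,t)$--integral at $t = \abs{x-x'}$, use $\norm{\psi}_{L^\infty}$ and the support condition for the near part, use the Lipschitz bound $\abs{\psi(\tfrac{x-y}{t})-\psi(\tfrac{x'-y}{t})}\le C\tfrac{\abs{x-x'}}{t}$ for the far part, and invoke Hardy's inequality at $0$ and at $\infty$, which is exactly where the constants $s^{-p}$ and $(1-s)^{-p}$ enter. The only cosmetic differences are that you apply Hardy to the averaged quantities $k, K$ and pass to $p$-th powers afterwards via Jensen, whereas the paper applies H\"older inside the ball first and then Hardy, and that you use $B(x,2t)$ with polar coordinates in $x'-x$ in place of the paper's symmetric ball $B(\tfrac{x+y}{2},\tfrac{3t}{2})$ with the change of variables $x=w+r\sigma$, $y=w-r\sigma$; neither change alters the substance of the argument.
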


The constant \(C\) in \cref{prop_estimate_reconstruction} depends on the reconstruction kernel \(\psi\), on the \(s\) and \(p\) and blows up like $s^{-p} (1-s)^{-p}$ when \(s \to 0\) and \(s \rightarrow 1\).

Since by \cref{def_reconstruction}, \(\supp \psi \subset B (0, 1)\), the integrability assumption on the vector field \(\Theta\) ensures that the function \(v (x)\) is well-defined almost everywhere on \(\Rset^n\).

The proof of \cref{prop_estimate_reconstruction} follows the strategy of proofs of extensions of functions in fractional Sobolev spaces \citelist{\cite{Uspenskii}\cite{Mironescu_Russ2015}} and relies on the classical Hardy inequalities \cite{Hardy_Littlewood_Polya_1952}*{\S 329} (see also for example \cite{Mironescu_Russ2015}*{Proposition 2.1}).

\begin{lemma}
  \label{lemma_Hardy}
  Let $p\in [1,+\infty)$ and $\alpha \in (0,+\infty)$. 
  If the function $g : (0, +\infty) \to (0, + \infty)$ is measurable, then (Hardy inequality at \(0\))
  \begin{equation}\label{H0}
    \int_0^{+\infty} \left( \int_0^t g(r) \dif r \right)^p \frac{1}{t^{1 + \alpha}} \dif t  
    \leq \Bigl(\frac{p}{\alpha}\Bigr)^p  \int_0^{+\infty} \frac{g(r)^p}{r^{1 + \alpha - p}}  \dif r,
  \end{equation}
  and (Hardy inequality at \(\infty\))
  \begin{equation}\label{Hinfty}
    \int_0^{+\infty} \left( \int_t^{+\infty} g(r) \dif r \right)^p \frac{1}{t^{1 - \alpha}} \dif t  
    \leq \Bigl(\frac{p}{\alpha}\Bigr)^p  \int_0^{+\infty} \frac{g(r)^p}{r^{1 - \alpha - p}}  \dif r.
  \end{equation}
\end{lemma}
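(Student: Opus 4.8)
The plan is to deduce both inequalities from \emph{Minkowski's integral inequality} in \(L^p\) together with a scaling change of variables. Since the hypotheses force every function in sight to be nonnegative, all the interchanges of integrals below are legitimate by Tonelli's theorem, and each side of each inequality is a priori an element of \([0, +\infty]\); in particular Minkowski's integral inequality applies unconditionally, with no integrability assumed, so it suffices to bound the left-hand side by the right-hand side.

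For the Hardy inequality \eqref{H0} at \(0\), I would first rescale the inner integral by setting \(r = \sigma t\) with \(\sigma \in (0, 1)\), which gives \(\int_0^t g (r) \dif r = t \int_0^1 g (\sigma t) \dif \sigma\), hence
\[
\int_0^{+\infty} \biggl( \int_0^t g (r) \dif r \biggr)^p \frac{\dif t}{t^{1 + \alpha}}
= \int_0^{+\infty} \biggl( \int_0^1 g (\sigma t) \dif \sigma \biggr)^p t^{p - 1 - \alpha} \dif t .
\]
Applying Minkowski's integral inequality to the nonnegative measurable function \((t, \sigma) \mapsto g (\sigma t)\), where \(t\) ranges over \((0, +\infty)\) carrying the measure \(t^{p - 1 - \alpha} \dif t\) and \(\sigma\) ranges over \((0, 1)\) carrying Lebesgue measure, and then, for each fixed \(\sigma\), performing the substitution \(u = \sigma t\) inside the resulting inner integral, I obtain
\[
\biggl( \int_0^{+\infty} \biggl( \int_0^t g (r) \dif r \biggr)^p \frac{\dif t}{t^{1 + \alpha}} \biggr)^{1/p}
\le \biggl( \int_0^1 \sigma^{-1 + \alpha/p} \dif \sigma \biggr) \biggl( \int_0^{+\infty} \frac{g (u)^p}{u^{1 + \alpha - p}} \dif u \biggr)^{1/p} ,
\]
where I have used that \(u^{p - 1 - \alpha} = u^{-(1 + \alpha - p)}\). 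Inequality \eqref{H0} then follows from the elementary computation \(\int_0^1 \sigma^{-1 + \alpha/p} \dif \sigma = p/\alpha\); this is exactly where the hypothesis \(\alpha > 0\) is used, since it is precisely the condition guaranteeing the convergence of that integral, and it also pins down the sharp constant \((p/\alpha)^p\).

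The Hardy inequality \eqref{Hinfty} at \(\infty\) is proved in the same way, now with the rescaling \(r = \sigma t\), \(\sigma \in (1, +\infty)\): one has \(\int_t^{+\infty} g (r) \dif r = t \int_1^{+\infty} g (\sigma t) \dif \sigma\), the relevant weight becomes \(t^{p - 1 + \alpha} \dif t\), the fixed-\(\sigma\) substitution \(u = \sigma t\) produces a factor \(\sigma^{-1 - \alpha/p}\), and \(\int_1^{+\infty} \sigma^{-1 - \alpha/p} \dif \sigma = p/\alpha\) — again convergent precisely because \(\alpha > 0\), and yielding \(u^{p - 1 + \alpha} = u^{-(1 - \alpha - p)}\) as needed. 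I do not expect any genuine obstacle here: the only points deserving a word of care are the unconditional validity of Minkowski's integral inequality for nonnegative integrands and the bookkeeping of the exponents of \(t\) and \(\sigma\) in the two substitutions. One could alternatively set \(G (t) \defeq \int_0^t g\), integrate by parts in \(\int_0^{+\infty} G (t)^p t^{-1 - \alpha} \dif t\), and close the estimate with Hölder's inequality; this produces the same constant but requires an extra truncation/approximation step to dispose of the boundary terms and to guarantee the a priori finiteness of the left-hand side before dividing, so I would favour the Minkowski argument.
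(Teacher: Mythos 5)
Your proof is correct, but note that the paper does not actually prove \cref{lemma_Hardy}: it is stated as a classical result with references to Hardy--Littlewood--P\'olya, \S 329, and to Mironescu--Russ, Proposition~2.1. So there is no ``paper proof'' to compare against; you have supplied a self-contained derivation of a fact the paper treats as known.

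Your Minkowski-plus-scaling argument is one of the standard routes and the computations check out: after the rescaling \(r=\sigma t\), Minkowski's integral inequality (which for \(p=1\) degenerates to Tonelli, so the endpoint case is covered automatically) together with the substitution \(u=\sigma t\) inside the inner integral produces the prefactor \(\sigma^{-1+\alpha/p}\) in the case of \eqref{H0} and \(\sigma^{-1-\alpha/p}\) in the case of \eqref{Hinfty}, both of which integrate to \(p/\alpha\) exactly when \(\alpha>0\). This yields the sharp constant \((p/\alpha)^p\) with no truncation or boundary-term bookkeeping, which is precisely the advantage over the classical Hardy--Littlewood--P\'olya proof via integration by parts and H\"older that you mention at the end; the latter gives the same constant but needs an a~priori finiteness reduction. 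One small stylistic remark: you could phrase the argument once and for all by observing that \eqref{Hinfty} follows from \eqref{H0} via the change of variable \(t\mapsto 1/t\), \(r\mapsto 1/r\) applied to \(\tilde g(r)\defeq g(1/r)/r^2\), which would halve the bookkeeping, but this is a matter of taste.
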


\begin{proof}
  [Proof of \cref{prop_estimate_reconstruction}]
  \resetconstant
By definition of the function \(v\), we have for every \(x, y \in \Rset^n\)
\begin{equation} 
  \label{eq_kai4hahriepie4ahNah}
 v (x) - v (y)
 = \iint\limits_{\Rset_+^{n + 1}}
 \biggl(\psi \Bigl(\frac{x - z}{t}\Bigr) - \psi \Bigl(\frac{y - z}{t}\Bigr) \biggr)
\cdot \frac{\Theta (z, t)}{t^n} \dif t \dif z.
\end{equation}
We next have by H\"older's inequality, for every \(x, y \in \Rset^n\),
\begin{equation*}
\Biggl\lvert\;
\iint\limits_{\Rset^{n} \times [0, \abs{x - y}]}
\psi \Bigl(\frac{x - z}{t}\Bigr)
\cdot \frac{\Theta (z, t)}{t^n}
\dif z \dif t \Biggr\rvert
\le 
\Cl{cst_Yahyahyaetohca5moay} \int_0^{\abs{x - y}} \biggl(\int_{B (x, t)} \frac{\abs{\Theta (z, t)}^p}{t^n} \dif z \biggr)^\frac{1}{p} \dif t .
\end{equation*}
Hence, by integration, 
\begin{multline*}
\iint\limits_{\Rset^n \times \Rset^n}
\Biggl\lvert
\;
\iint\limits_{\Rset^{n} \times [0, \abs{x - y}]}
\psi \Bigl(\frac{x - z}{t}\Bigr)
\cdot \frac{\Theta (z, t)}{t^n}
\dif z \dif t
\Biggr\rvert^p \frac{1}{\abs{x - y}^{n+sp}}
\dif x \dif y\\
\le 
\Cr{cst_Yahyahyaetohca5moay}
\iint\limits_{\Rset^n \times \Rset^n}
\biggl(
\int_0^{\abs{x - y}} \biggl(\int_{B (x, t)} \frac{\abs{\Theta (z, t)}^p}{t^{n}} \dif z \biggr)^\frac{1}{p} \dif t
\biggr)^p \frac{1}{\abs{x - y}^{n + sp}} \dif x \dif y
.
\end{multline*}
By performing the integration in spherical coordinates of \(y\)  centred at \(x\), we get 
\begin{multline*}
  \iint\limits_{\Rset^n \times \Rset^n}
  \Biggl\lvert
  \;
  \iint\limits_{\Rset^{n} \times [0, \abs{x - y}]}
  \psi \Bigl(\frac{x - z}{t}\Bigr)
  \cdot \frac{\Theta (z, t)}{t^n}
  \dif t \dif z
  \Biggr\rvert^p \frac{1}{\abs{x - y}^{n+sp}}
  \dif x \dif y\\
  \le 
  \C 
  \int_{\Rset^n}
  \int_0^{+\infty} 
  \biggl(
  \int_0^{r} \biggl(\int_{B (x, t)} \frac{\abs{\Theta (z, t)}^p}{t^{n}} \dif z \biggr)^\frac{1}{p}  \dif t
  \biggr)^p \frac{1}{r^{1 + sp}} \dif r \dif x.
\end{multline*}
In view of Hardy's inequality at \(0\) (\cref{lemma_Hardy}), we have for every \(x \in \Rset^n\), 
\begin{multline*}
  \int_0^{+\infty} 
  \biggl( 
  \int_0^{r} \biggl(\int_{B (x, t)} \frac{\abs{\Theta (z, t)}^p}{t^{n}} \dif z \biggr)^\frac{1}{p}  \dif t
\biggr)^p \frac{1}{r^{1 + sp}} \dif r\\
\le 
\frac{1}{s^p} 
\int_0^{+\infty} 
\int_{B (x, t)} \frac{\abs{\Theta (z, t)}^p}{t^{n + 1 - (1 - s) p}} \dif z \dif t.
\end{multline*}
Hence, we have 
\begin{multline}
  \label{eq_ohGie2oovi8thougied}
  \iint\limits_{\Rset^n \times \Rset^n}
  \Biggl\lvert
  \;
  \iint\limits_{\Rset^{n} \times [0, \abs{x - y}]}
  \psi \Bigl(\frac{x - z}{t}\Bigr)
  \cdot \frac{\Theta (z, t)}{t^n}
  \dif t \dif z
  \Biggr\rvert^p \frac{1}{\abs{x - y}^{n+sp}}
  \dif x \dif y\\
  \le 
  \C 
  \int_{\Rset^n}
  \int_0^{+\infty} 
  \int_{B (x, t)} \frac{\abs{\Theta (z, t)}^p}{t^{n + 1 - (1 - s) p}} \dif z \dif t \dif x
  \le \Cl{cst_Ahyaib5aiD7oSoR0aFa} 
  \int_{\Rset^{n + 1}_+}
    \frac{\abs{\Theta (z, t)}^p}{t^{1 - (1 - s) p}} \dif z \dif t.
\end{multline}
Similarly, we have by exchanging \(x\) and \(y\),
\begin{equation}
  \label{eq_Ahyaib5aiD7oSoR0aFa}
  \iint\limits_{\Rset^n \times \Rset^n}
  \Biggl\lvert
  \;
  \iint\limits_{\Rset^{n} \times [0, \abs{x - y}]}
  \psi \Bigl(\frac{y - z}{t}\Bigr)
  \cdot \frac{\Theta (z, t)}{t^n}
  \dif t \dif z
  \Biggr\rvert^p \frac{1}{\abs{x - y}^{n+sp}}
  \dif x \dif y
  \le \Cr{cst_Ahyaib5aiD7oSoR0aFa} 
  \iint\limits_{\Rset^{n + 1}_+}
  \frac{\abs{\Theta (z, t)}^p}{t^{1 - (1 - s) p}} \dif z \dif t.
\end{equation}
We observe now that if \(t \ge \abs{x - y}\), then 
\[
B (x, t) \cup B (y, t) \subset B \Bigl(\frac{x + y}{2}, t + \frac{\abs{x - y}}{2} \Bigr) 
\subset B \Bigl(\frac{x + y}{2}, \frac{3t}{2} \Bigr).
\]
Moreover since the function \(\psi\) is Lipschitz continuous, we have for every \(x, y, z \in \Rset^n\) and every \(t \in (0, +\infty)\),
\[
\Bigl\lvert
\psi \Bigl(\frac{x - z}{t}\Bigr) - \psi \Bigl(\frac{y - z}{t}\Bigr)
\Bigr\rvert
\le \C \frac{\abs{x - y}}{t}.
\]

We have thus 
\begin{multline*}
  \Biggl\lvert
  \;
  \iint\limits_{\Rset^{n}\times [\abs{x - y}, +\infty)}
\biggl(\psi \Bigl(\frac{x - z}{t}\Bigr) - \psi \Bigl(\frac{y - z}{t}\Bigr) \biggr)
\cdot \frac{\Theta (z, t)}{t^n} \dif t \dif z 
\Biggr\rvert \\
\le \C\, \abs{x - y} 
\int_{\abs{x - y}}^{+\infty} \int_{B (\frac{x + y}{2}, \frac{3t}{2})} 
\frac{\abs{\Theta (z, t)}}{t^{n + 1}} \dif z \dif t 
\end{multline*}
By H\"older's inequality, we deduce that 
\begin{multline*}
\Biggl\lvert
\;
\iint\limits_{\Rset^{n}\times [\abs{x - y}, +\infty)}
\biggl(\psi \Bigl(\frac{x - z}{t}\Bigr) - \psi \Bigl(\frac{y - z}{t}\Bigr) \biggr)
\cdot \frac{\Theta (z, t)}{t^n} \dif t \dif z 
\Biggr\rvert\\
\le \C\, \abs{x - y}
\int_{\abs{x - y}}^{+\infty} \biggl(\int_{B (\frac{x + y}{2}, \frac{3t}{2})} 
\frac{\abs{\Theta (z, t)}^p}{t^{n + p}}\dif z \biggr)^\frac{1}{p} \dif t .
\end{multline*}
By integration with respect to \((x, y)\) over \(\Rset^n \times \Rset^n\), we get 
\begin{multline*}
  \iint\limits_{\Rset^n \times \Rset^n}
  \Biggl\lvert
  \;
  \iint\limits_{\Rset^{n}\times [\abs{x - y}, +\infty)}
  \biggl(\psi \Bigl(\frac{x - z}{t}\Bigr) - \psi \Bigl(\frac{y - z}{t}\Bigr) \biggr)
  \cdot \frac{\Theta (z, t)}{t^n} \dif t \dif z 
  \Biggr\rvert^p \frac{1}{\abs{x - y}^{n + sp}} \dif x \dif y 
  \\
  \le \C
  \iint\limits_{\Rset^n \times \Rset^n}
  \biggl(\int_{\abs{x - y}}^{+\infty} \biggl(\int_{B (\frac{x + y}{2}, \frac{3t}{2})} 
  \frac{\abs{\Theta (z, t)}^p}{t^{n + p}}\dif z \biggr)^\frac{1}{p} \dif t
  \biggr)^p \frac{1}{\abs{x - y}^{n - (1 - s)p}} \dif x \dif y.
\end{multline*}
By a change of variable \(x = w + r \sigma\) and \(y = w - r \sigma\), with \(w \in \Rset^n\), \(r \in (0, + \infty)\) and \(\sigma \in \mathbb{S}^{n - 1}\), we get 
\begin{multline*}
  \iint\limits_{\Rset^n \times \Rset^n}
  \Biggl\lvert\;
  \iint\limits_{\Rset^{n}\times [\abs{x - y}, +\infty)}
  \biggl(\psi \Bigl(\frac{x - z}{t}\Bigr) - \psi \Bigl(\frac{y - z}{t}\Bigr) \biggr)
  \cdot \frac{\Theta (z, t)}{t^n} \dif t \dif z 
  \Biggr\rvert^p \frac{1}{\abs{x - y}^{n + sp}} \dif x \dif y 
  \\
  \le \C
  \int_{\Rset^n}
  \int_0^{+\infty}
  \biggl(\int_{r}^{+\infty} \biggl(\int_{B (w, \frac{3t}{2})} 
  \frac{\abs{\Theta (z, t)}^p}{t^{n + p}}\dif z \biggr)^\frac{1}{p} \dif t
  \biggr)^p \frac{1}{r^{1 - (1 - s)p}} \dif r \dif w.
\end{multline*}
  By Hardy's inequality at \(\infty\) (\cref{lemma_Hardy}), we get for every \(w \in \Rset^n\), 
\begin{multline*}
\int_0^{+\infty}
\biggl(\int_{r}^{+\infty} \biggl(\int_{B (w, \frac{3t}{2})} 
\frac{\abs{\Theta (z, t)}^p}{t^{n + p}}\dif z \biggr)^\frac{1}{p} \dif t
\biggr)^p \frac{1}{r^{1 - (1 - s)p}} \dif r\\
\le \frac{1}{(1 - s)^p} \int_0^{+\infty} \int_{B (w, \frac{3t}{2})} 
\frac{\abs{\Theta (z, t)}^p}{t^{n + 1 - (1 - s)p}} \dif z \dif t,
\end{multline*}
and thus 
\begin{multline}
  \label{eq_seg9Oowohngeishooqu}
  \iint\limits_{\Rset^n \times \Rset^n}
  \Biggl\lvert\;
  \iint\limits_{\Rset^{n}\times [\abs{x - y}, +\infty)}
  \biggl(\psi \Bigl(\frac{x - z}{t}\Bigr) - \psi \Bigl(\frac{y - z}{t}\Bigr) \biggr)
  \cdot \frac{\Theta (z, t)}{t^n} \dif t \dif z 
  \Biggr\rvert^p \frac{1}{\abs{x - y}^{n + sp}} \dif x \dif y 
  \\
  \le 
  \C
  \int_{\Rset^n}
  \int_0^{+\infty} \int_{B (w, \frac{3t}{2})} 
  \frac{\abs{\Theta (z, t)}^p}{t^{n + 1 - (1 - s)p}} \dif z \dif t \dif w
  \le \C
  \iint\limits_{\Rset^{n + 1}_+}
  \frac{\abs{\Theta (z, t)}^p}{t^{1 - (1 - s)p}} \dif z \dif t.
\end{multline}
By combining the inequalities \eqref{eq_kai4hahriepie4ahNah}, \eqref{eq_ohGie2oovi8thougied}, \eqref{eq_Ahyaib5aiD7oSoR0aFa} and \eqref{eq_seg9Oowohngeishooqu}, we reach the conclusion. 
\end{proof}

\begin{proof}[Proof of \cref{proposition_decomposition_Rn}  when \(
  \int_{\Rset^n} \abs{u (x)}/(1 + \abs{x}^n) \dif x < + \infty\)
  ]
  \resetconstant
  We fix a function \(\varphi : \Rset^n \to \Rset\) that satisfies the assumptions of \cref{lemma_extension_half_space}
  and a reconstruction kernel \(\psi : \Rset^n \to \Rset^{n + 1}\).
Let \(U \in C^1 (\Rset^{n + 1}_+)\) be the function defined in \cref{lemma_extension_half_space}.
Since the function \(u\) is locally integrable, for almost every \(x \in \Rset^n\), we have \(u(x)=\lim_{t\rightarrow 0} U(x,t)\). 
By letting \(\tau \rightarrow 0\) and  \(T \rightarrow +\infty\) in \cref{prop_reconstruction} and noting that 
\(
  \liminf_{R \to \infty} \frac{1}{R^n} \int_{B (0, R)} \abs{u} = 0\), we obtain for almost every \(x \in \Rset^n\)
\begin{equation}
  \label{eq_reconstruction}
  u (x) = - \iint\limits_{\Rset^{n + 1}_+} \psi \Bigl(\frac{y-x}{t}\Bigr)\cdot \frac{\nabla U (y,t)}{t^{n}} \dif t \dif y.
\end{equation}

We define for each \(i \in \{1, \dotsc, \ell\}\), the vector field \(\Theta_i : \Rset^{n + 1}_+ \to \Rset^{n + 1}\) by
\begin{equation}
  \label{eq_decomposition_theta}
  \Theta_i 
  \defeq
  \textbf{1}_{A_i} \nabla U ,
\end{equation}
where the function \(\textbf{1}_{A_i} : \Rset^{n + 1}_+ \to \Rset\) is the characteristic function of the set 
\begin{multline*}
A_i \defeq \Bigl\{(x, t) \in \Rset^n   \times \Rset^+ \st
\text{for each \(j \in \{1, \dotsc, i - 1\}\) }
\frac{\abs{\nabla U (x, t)}^{p_i}}{t^{1 - (1 - s_i)p_i}}
< \frac{\abs{\nabla U (x, t)}^{p_j}}{t^{1 - (1 - s_j)p_j}}\\
\text{ and for each \(j \in \{i + 1, \dotsc,\ell \}\) }
\frac{\abs{\nabla U (x, t)}^{p_i}}{t^{1 - (1 - s_i)p_i}}
\le \frac{\abs{\nabla U (x, t)}^{p_j}}{t^{1 - (1 - s_j)p_j}}
\Bigr\}.
\end{multline*}
We observe that \(\bigcup_{i = 1}^\ell A_i = \Rset^n\) and that if \(i, j \in \{1, \dotsc, \ell\}\) and \(i \ne j\), one has \(A_i \cap A_j= \emptyset\).
Therefore,
\begin{equation}
  \label{eq_aesh8nie1iexaew8Fah}
  \Theta_1  + \dots + \Theta_\ell = \nabla U
\end{equation}
and by \cref{lemma_extension_half_space}
\begin{equation}
  \label{eq_ooSh6aeN2aiCh4om5xi}
  \begin{split}
\sum_{i = 1}^\ell 
\iint\limits_{\Rset^{n + 1}_+} \frac{\abs{\Theta_i (x, t)}^p}{t^{1 - (1 - s_i)p_i}}\dif x
\dif t  
&= \sum_{i = 1}^\ell 
\iint\limits_{A_i} \frac{\abs{\nabla U (x, t)}^p}{t^{1 - (1 - s_i)p_i}}\dif x
\dif t  \\
&= \iint\limits_{\Rset^{n+1}_+} \min_{1\leq i \leq \ell} \frac{|\nabla U(x,t)|^{p_i}}{t^{1 - (1 - s_i)p_i}} \dif x \dif t\\
&\le \C \iint\limits_{\Rset^n \times \Rset^n}
\min_{1 \le i \le \ell} \frac{\abs{u (x) - u (y)}^{p_i}}{\abs{x - y}^{n + s_i p_i}} \dif x \dif y.
\end{split}
\end{equation}

For every \(i \in \{1, \dotsc, \ell\}\) and \(R \in (0, +\infty)\), we have by H\"older's inequality,
\begin{equation}
  \label{eq_ahgan8Ain5ahNao2ael}
\begin{split}
  \int_{B (0, R)} 
  &
    \int_0^{R} 
      \int_{B(x, t)} 
        \frac
          {\abs{\Theta_i (y, t)}}
          {t^{n}} 
        \dif y 
        \dif t
      \dif x \\
 &= 
 \int_{B (0, 2R)}
  \int_0^R 
  \int_{B(y, t) \cap B (0, R)} 
  \frac
  {\abs{\Theta_i (y, t)}}
  {t^{n}} 
  \dif x
  \dif t 
  \dif y\\
& \le
\C 
  \iint\limits_{B (0, 2R) \times (0, R)}
  \abs{\Theta_i (y, t)}
  \dif y 
  \dif t\\
&\le \C
\biggl(
\iint\limits_{B (0, 2R) \times (0, R)}
\frac
{\abs{\Theta_i (y, t)}^{p_i}}
{t^{1 - (1 - s_i)p_i}} 
\dif y \dif t
\biggr)^\frac{1}{p_i}
\biggl( 
\iint\limits_{B (0, 2R) \times (0, R)}
\frac
{1}
{t^{1 - s_i \frac{p_i}{p_i - 1}}} 
\dif y \dif t
\biggr)^{1 - \frac{1}{p_i}}\\
&< +\infty.
  \end{split}
\end{equation}
On the other hand, we have for each \(x \in B (0, R)\), 
\begin{equation}\label{eq_Aoy2ezeib5yaiveeyee}
\begin{split}
\int_{R}^{+\infty} \int_{B (x, t)} \frac{\abs{\Theta_i (y, t)}}{t^n} \dif y\dif t
&\le \int_{R}^{+\infty} \int_{B (x, t)} \frac{\abs{\nabla U (y, t)}}{t^n} \dif y \dif t\\
&\le \C \int_{R}^{+\infty} \int_{B (x, t)} \int_{B (y, t)} \frac{\abs{u (z)}}{t^{2 n + 1}} \dif z \dif y \dif t\\
&\le \C \int_{R}^{+\infty} \int_{B (x, 2t)} \frac{\abs{u (z)}}{t^{n + 1}} \dif z \dif t\\
&\le \C \int_{\Rset^n} \frac{\abs{u (z)}}{(R + \abs{z - x})^n} \dif z < +\infty.
\end{split}
\end{equation}
Hence, by \eqref{eq_ahgan8Ain5ahNao2ael} and \eqref{eq_Aoy2ezeib5yaiveeyee}, for almost every \(x \in \Rset^n\),
\begin{equation}
  \label{eq_Yoos9Iefiefooc5peen}
\int_0^{+\infty} 
\int_{B(x, t)} 
\frac
{\abs{\Theta_i (y, t)}}
{t^{n}} 
\dif y 
\dif t
< 
+\infty .
\end{equation}
In view of \eqref{eq_Yoos9Iefiefooc5peen}, we define for each \(i \in \{1, \dotsc, \ell\}\) the function \(u_i : \Rset^n \to \Rset\) by setting 
for each \(x \in \Rset^n\) 
\[
u_i (x) \defeq - \iint\limits_{\Rset^{n + 1}_+}  \psi \Bigl(\frac{y-x}{t}\Bigr)\cdot \frac{\Theta_i (y, t)}{t^{n}} \dif t \dif y.
\]
In view of \eqref{eq_aesh8nie1iexaew8Fah} and \eqref{eq_reconstruction}, we have 
\[
 u = u_1 + \dotsb + u_\ell
\]
almost everywhere in \(\Rset^n\).
Moreover, by \cref{prop_estimate_reconstruction}, we have
\begin{equation*}
  \iint\limits_{\Rset^n \times \Rset^n}
  \frac{\abs{u_i (x) - u_i (y)}^p}{\abs{x - y}^{m + s p}}
  \dif x \dif y
  \leq 
  \C
  \iint\limits_{\Rset^{n + 1}_+}  \frac{\abs{\Theta_i (x, t)}^p}{t^{1 - (1 - s) p}} \dif t \dif x .
\end{equation*}
We conclude by the estimate \eqref{eq_ooSh6aeN2aiCh4om5xi}. 
\end{proof}

\begin{remark}
  When \(p_1, \dotsc, p_\ell > 1\), the functions \(u_1, \dotsc, u_\ell\) can also be constructed 
by estimates on the Riesz transform with Muckenhoupt weights and trace theory.
One extend \(\Theta_i\) to \(\Rset^{n + 1}\) in such a way that 
\(\Theta_i : \Rset^{n + 1} \to \Rset^{n + 1}\) commutes with the reflection with respect to the hyperplane \(\Rset^n \times \{0\}\).
  One defines then \(\Xi_i \defeq \mathcal{R} \mathcal{R} \cdot \Theta_i\), where \(\mathcal{R}
  = (\mathcal{R}_1, \dotsc, \mathcal{R}_{n + 1})\) is the vector Riesz transform.
  The weights appearing in \eqref{eq_ooSh6aeN2aiCh4om5xi} satisfy the Muckenhoupt condition and  thus \(\Xi_i\) satisfies an estimate with the same weight 
  \citelist{ \cite{Stein1993}{Theorem V.2}\cite{Coifman_Fefferman_1974}*{Theorem III}}.
  By construction, there exists a function \(U_i : \Rset^{n + 1} \to \Rset\) such that 
  \(\nabla U_i = \Xi_i\). One defines then \(u_i\) to be the trace of \(U_i\).
  One has \(u = u_1 + \dotsb + u_\ell\) because \(\nabla U = \mathcal{R} \mathcal{R} \cdot \nabla U\).
\end{remark}

\begin{remark}
  \resetconstant
  If for every \(i \in \{1, \dotsc \ell\}\) one has \(n > s_i p_i\), then the condition 
  \[
  \iint\limits_{\Rset^n \times \Rset^n}
  \min_{1 \le i \le \ell} \frac{\abs{u (x)-u (y)}^{p_i}}{\abs{x - y}^{n + s_i p_i}}
  \dif x \dif y
  < + \infty,
  \]
  implies the existence of a constant \(\kappa \in \Rset\) such that 
  \[
  \int_{\Rset^n} \frac{\abs{u (x) - \kappa}}{1 + \abs{x}^n} \dif x  <+\infty.
  \]  
Indeed, for every \(R \in (0, +\infty)\), one has by \cref{lemma_Holder_min},
\[
\min_{1 \le i \le \ell} 
\biggl( R^{\frac{n}{p_i} - s_i} \fint\limits_{B (0,2R)}\fint\limits_{B (0, R)} \abs{u (x) -  u(y)} \dif x \dif y \biggr)^{p_i}
\le 
\C \iint\limits_{\Rset^n \times \Rset^n}
\min_{1 \le i \le \ell} \frac{\abs{u (x)-u (y)}^{p_i}}{\abs{x - y}^{n + s_i p_i}}
\dif x \dif y;
\]
if \(\alpha \defeq \min_{1 \le i \le \ell} \frac{n}{p_i}-s_i  > 0\), one deduces that when \(R \in (1, +\infty)\)
\[
  R^{\alpha} \fint_{B (0, 2R)}\fint_{B (0, R)} \abs{u (x) -  u(y)} \dif x \dif y\le \C;
\]
and hence by a dyadic decomposition of radii, if \(\rho \ge R \ge 1\),
\[
R^{\alpha} \fint_{B (0, \rho)}\fint_{B (0, R)} \abs{u (x) -  u(y)} \dif x \dif y \le \C;
\]
hence 
\[
\kappa = \lim_{\rho \to \infty} \fint_{B (0, \rho)} u,
\]
is well-defined, and 
\[
\begin{split}
\int_{\Rset^n} \frac{\abs{u (x) - \kappa}}{1 + \abs{x}^n} \dif x
&\le 
\C \int_1^{+\infty} \frac{1}{r^{n + 1}} \biggl(\int_{B (0, r)} \abs{u - \kappa} \biggr) \dif r\\
&\le 
\C \int_1^{+\infty} \frac{1}{r^{\alpha + 1}} \dif r < +\infty.
  \end{split}
\]
This approach fails when \(\max_{1 \le i \le n} s_i p_i \ge n\) since there exist then functions \(u \in \dot{W}^{s_1, p_1} (\Rset^n)\) with \(sp \geq n \) such that
\(\lim_{\abs{x} \to +\infty} \abs{u (x)} = + \infty\). 
\end{remark}

In order to treat the case where \(u \in L^1_{\mathrm{loc}} (\Rset^n)\) but \(\int_{\Rset^n} \abs{u (x)}/(1+ \abs{x}^n) \dif x =+\infty\) we rely on a truncation construction.

\begin{lemma}
\label{lemma_cut_domain}
Let \(n \in \Nset^*\), let \(\ell \in \Nset^*\), let \(s_1, \dotsc, s_\ell \in (0, 1)\) and let \(p_1, \dotsc, p_\ell \in [1,+\infty)\).
There exists a constant \(C\) such that if \(R \in (0, +\infty)\), if \(u \in L^1 (B (0, R))\), if 
\[
\int_{B (0, R)} u = 0,
\]
and if \(\eta \in C^{0, 1}_c (B (0, R))\), then 
\begin{multline*}
  \iint\limits_{\Rset^n \times \Rset^n}
  \min_{1 \le i \le \ell} \frac{\abs{(\eta u) (x)- (\eta u) (y)}^{p_i}}{\abs{x - y}^{n + s_i p_i}}
  \dif x \dif y\\
  \le 
  C 
  \iint\limits_{B (0, R) \times B (0, R)}
  \min_{1 \le i \le \ell} \frac{(R \abs{\eta}_{C^{0, 1}} \abs{u (x)- u (y)})^{p_i}}{\abs{x - y}^{n + s_i p_i}}
  \dif x \dif y.
\end{multline*}
\end{lemma}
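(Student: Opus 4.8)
The plan is to split $\Rset^n \times \Rset^n$ according to membership in $B(0,R)$. Since $\supp \eta \subset B(0,R)$, the function $\eta u$ vanishes outside $B(0,R)$, so the contribution of $(\Rset^n \setminus B(0,R)) \times (\Rset^n \setminus B(0,R))$ to the left-hand side is zero, and it suffices to estimate the \emph{diagonal} piece $\iint_{B(0,R) \times B(0,R)}$ and the two (symmetric) \emph{tail} pieces $\int_{B(0,R)}\int_{\Rset^n \setminus B(0,R)}$. On the diagonal piece I would use the discrete Leibniz identity
\[
(\eta u)(x) - (\eta u)(y) = \eta(x)\bigl(u(x) - u(y)\bigr) + \bigl(\eta(x) - \eta(y)\bigr)\, u(y),
\]
together with the elementary bounds $\abs{\eta(x)} \le \abs{\eta}_{C^{0,1}}(R - \abs{x}) \le R\,\abs{\eta}_{C^{0,1}}$ for $x \in B(0,R)$ (valid because $\eta$ vanishes on and outside $\partial B(0,R)$) and $\abs{\eta(x) - \eta(y)} \le \abs{\eta}_{C^{0,1}}\abs{x-y}$.

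To push the Leibniz splitting through the minimum I need the elementary inequality: if $a,b \ge 0$, $\alpha_1,\dots,\alpha_\ell \ge 0$ and $\bar p \defeq \max_{1 \le i \le \ell} p_i$, then
\[
\min_{1 \le i \le \ell} \alpha_i (a+b)^{p_i}
\le 2^{\bar p}\Bigl(\min_{1 \le i \le \ell} \alpha_i a^{p_i} + \min_{1 \le i \le \ell} \alpha_i b^{p_i}\Bigr).
\]
This is the one point requiring care, since $t \mapsto \min_i \alpha_i t^{p_i}$ is superadditive rather than subadditive, so the minimum cannot be distributed over the sum termwise; however, by symmetry one may assume $a \le b$, choose $j$ realizing $\min_i \alpha_i b^{p_i}$, and then $\alpha_j(a+b)^{p_j} \le 2^{p_j}\alpha_j b^{p_j} \le 2^{\bar p}\min_i \alpha_i b^{p_i}$. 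Applying this with $a = \abs{\eta(x)}\abs{u(x)-u(y)}$ and $b = \abs{\eta(x)-\eta(y)}\abs{u(y)}$, together with the bounds above, gives for $x,y \in B(0,R)$
\begin{multline*}
\min_{1 \le i \le \ell} \frac{\abs{(\eta u)(x) - (\eta u)(y)}^{p_i}}{\abs{x-y}^{n+s_ip_i}}\\
\le 2^{\bar p}\min_{1 \le i \le \ell} \frac{\bigl(R\,\abs{\eta}_{C^{0,1}}\abs{u(x)-u(y)}\bigr)^{p_i}}{\abs{x-y}^{n+s_ip_i}}
+ 2^{\bar p}\min_{1 \le i \le \ell} \frac{\abs{\eta}_{C^{0,1}}^{p_i}\abs{u(y)}^{p_i}}{\abs{x-y}^{n-(1-s_i)p_i}}.
\end{multline*}
The first term on the right, integrated over $B(0,R) \times B(0,R)$, is exactly (a multiple of) the right-hand side of the lemma.

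The main obstacle is the second, ``bad'' term, because it involves the pointwise value $\abs{u(y)}$ and the milder singularity $\abs{x-y}^{-(n-(1-s_i)p_i)}$. I would integrate it first in $x$: since $B(0,R) \subset B(y,2R)$, passing to polar coordinates centred at $y$ and invoking \cref{lemma_int_min_powers} (with $\gamma_i = (1-s_i)p_i$) bounds $\int_{B(0,R)} \min_i \abs{\eta}_{C^{0,1}}^{p_i}\abs{u(y)}^{p_i}\abs{x-y}^{-(n-(1-s_i)p_i)}\dif x$ by $C\min_i \bigl(R\,\abs{\eta}_{C^{0,1}}\bigr)^{p_i} R^{-s_ip_i}\abs{u(y)}^{p_i}$. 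Here the hypothesis $\int_{B(0,R)} u = 0$ enters: $u(y) = \fint_{B(0,R)}(u(y)-u(z))\dif z$, hence $\abs{u(y)} \le \fint_{B(0,R)}\abs{u(y)-u(z)}\dif z$, and \cref{lemma_Holder_min} (with $\mu$ the normalized Lebesgue measure on $B(0,R)$) turns $\min_i \alpha_i \abs{u(y)}^{p_i}$ into $\fint_{B(0,R)} \min_i \alpha_i \ell^{p_i}\abs{u(y)-u(z)}^{p_i}\dif z$. Integrating in $y$ and using that $\abs{y-z} \le 2R$ on $B(0,R) \times B(0,R)$, so that $R^{-n-s_ip_i} \le C\abs{y-z}^{-(n+s_ip_i)}$, converts the residual weight $R^{-s_ip_i}/\abs{B(0,R)}$ into $\abs{y-z}^{-(n+s_ip_i)}$ and bounds the bad term by $C$ times the right-hand side of the lemma.

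It remains to treat a tail piece, say $x \in B(0,R)$ and $y \in \Rset^n \setminus B(0,R)$, where $(\eta u)(y) = 0$, so the integrand equals $\min_i \abs{\eta(x)}^{p_i}\abs{u(x)}^{p_i}\abs{x-y}^{-(n+s_ip_i)}$. Since $\Rset^n \setminus B(0,R) \subset \{\,\abs{x-y} \ge R - \abs{x}\,\}$, integrating in $y$ in polar coordinates and using \cref{lemma_int_min_powers} (with $\gamma_i = s_ip_i$) gives $C\min_i \abs{\eta(x)}^{p_i}\abs{u(x)}^{p_i}(R-\abs{x})^{-s_ip_i}$; the sharper bound $\abs{\eta(x)} \le \abs{\eta}_{C^{0,1}}(R-\abs{x})$ cancels the negative power and produces $C\min_i \bigl(R\,\abs{\eta}_{C^{0,1}}\bigr)^{p_i} R^{-s_ip_i}\abs{u(x)}^{p_i}$, which is handled exactly like the bad term above (via $\int_{B(0,R)} u = 0$ and \cref{lemma_Holder_min}). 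Summing the diagonal contribution and the two tail contributions yields the claimed estimate, with a constant $C$ depending only on $n$, $\ell$, $s_1,\dots,s_\ell$ and $p_1,\dots,p_\ell$.
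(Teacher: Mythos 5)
Your proof is correct, and it rests on the same toolkit as the paper's: the product rule for differences, the mean-zero hypothesis combined with \cref{lemma_Holder_min}, and the power-integral bounds of \cref{lemma_int_min_powers}. The organization differs slightly. You first split the domain geographically into $B(0,R)\times B(0,R)$ and the two tails $B(0,R)\times(\Rset^n\setminus B(0,R))$, and to push the Leibniz splitting through the minimum you prove the explicit subadditivity inequality $\min_i\alpha_i(a+b)^{p_i}\le 2^{\bar p}\bigl(\min_i\alpha_i a^{p_i}+\min_i\alpha_i b^{p_i}\bigr)$. The paper instead decomposes the single region $B(0,R)\times\Rset^n$ into two sets $A$ and $B$ according to which term in the triangle inequality dominates --- an equivalent device that yields a constant $2^{p_i}$ without invoking $\bar p$ --- and then handles the ``bad'' term by splitting the $y$-integration into $|x-y|<R$ and $|x-y|\ge R$ rather than by your domain split; in particular the paper never needs your finer bound $\abs{\eta(x)}\le\abs{\eta}_{C^{0,1}}(R-\abs{x})$, since the factor $\min(\abs{x-y},R)$ in the Lipschitz estimate of $\eta(x)-\eta(y)$ does the analogous cancellation for it. Both routes are of comparable length; your version has the merit of isolating, as a standalone elementary fact, the near-subadditivity of $t\mapsto\min_i\alpha_i t^{p_i}$, which is precisely the point one might otherwise gloss over.
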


In the statement of \cref{lemma_cut_domain}, we define \((u \eta)= 0\) on \(\Rset^n \setminus B (0, R)\).

\begin{proof}%
[Proof of \cref{lemma_cut_domain}]%
\resetconstant
We have for every \(x \in B (0, R)\) and \(y \in \Rset^n\),
\begin{equation}
  \label{eq_Lusaiph9ochaequioch}
\abs{ \eta (x) u (x) - \eta (y) u (y)}
\le \abs{\eta (x) - \eta (y)} \abs{u (x)} 
+ \abs{\eta (y)} \abs{u (x) - u (y)}.
\end{equation}
We define the sets 
\[
A = 
\bigl\{( x, y) \in B (0, R) \times \Rset^n \st \abs{ \eta (x) u (x) - \eta (y) u (y)}
\le 2 \abs{\eta (y)} \abs{u (x) - u (y)} \bigr\}
\]
and \[
 B = 
 \bigl\{( x, y) \in B (0, R) \times \Rset^n \st \abs{ \eta (x) u (x) - \eta (y) u (y)}
 \le 2 \abs{\eta (x) - \eta (y)} \abs{u (x)} \bigr\}.
\]

In view of \eqref{eq_Lusaiph9ochaequioch} we have \(A \cup B = B (0, R) \times \Rset^n\), and therefore 
\begin{equation}
  \label{eq_quieChei4wu9ienae}
\begin{split}
\iint\limits_{B (0, R) \times \Rset^n}&
\min_{1 \le i \le \ell} \frac{\abs{(\eta u) (x)- (\eta u) (y)}^{p_i}}{\abs{x - y}^{n + s_i p_i}}
\dif x \dif y\\
&\le 
\iint\limits_{A}
\min_{1 \le i \le \ell} \frac{(2 \abs{\eta (y)} \abs{u (x) - u (y)})^{p_i}}{\abs{x - y}^{n + s_i p_i}}
\dif x \dif y\\
&\qquad +
\iint\limits_{B}
\min_{1 \le i \le \ell} \frac{(2 \abs{\eta (x) - \eta (y)} \abs{u (x)})^{p_i}}{\abs{x - y}^{n + s_i p_i}}
\dif x \dif y.
\end{split}
\end{equation}
We first observe that since \(\eta = 0\) in \(\Rset^n \setminus B (0, R)\), we have 
\(\abs{\eta} \le \abs{\eta}_{C^{0, 1}} R\) in \(B (0, R)\), and thus 
\begin{multline}  
  \label{eq_oite1ohH6neingeuw1o}
  \iint\limits_{A}
  \min_{1 \le i \le \ell} \frac{(2 \abs{\eta (y)} \abs{u (x) - u (y)})^{p_i}}{\abs{x - y}^{n + s_i p_i}}
  \dif x \dif y\\
  \le 
  \Cl{cst_aer2va8aishieKeex4o}
\iint\limits_{B (0, R) \times \Rset^n}
\min_{1 \le i \le \ell} \frac{(\abs{\eta (y)} \abs{u (x) - u (y)})^{p_i}}{\abs{x - y}^{n + s_i p_i}}
\dif x \dif y\\
\le \Cr{cst_aer2va8aishieKeex4o}\iint\limits_{B (0, R) \times B (0, R)}
\min_{1 \le i \le \ell} \frac{(\abs{\eta}_{C^{0, 1}} R \abs{u (x) - u (y)})^{p_i}}{\abs{x - y}^{n + s_i p_i}}
\dif x \dif y.
\end{multline}
Next we observe that since \(\int_{B (0, R)} u = 0\) by our assumption, we have 
for every \(x \in B (0, R)\),
\[
  \abs{u (x)}
  \le \fint_{B (0, R)} \abs{u (x) - u (z)} \dif z,
\]
and thus  by \cref{lemma_Holder_min}, for every \(x \in B (0, R)\) and \(y \in \Rset^n\), 
\[
\min_{1 \le i \le \ell} \frac{(\abs{\eta (x) - \eta (y)} \abs{u (x)})^{p_i}}{\abs{x - y}^{n + s_i p_i}}
\le
\C
\fint_{B (0, R)} \min_{1 \le i \le \ell} \frac{(\abs{\eta (x) - \eta (y)} \abs{u (x) - u (z)})^{p_i}}{\abs{x - y}^{n + s_i p_i}}
\dif z .
\]
Since for every \(x, y \in \Rset^n\), \(\abs{\eta (x) - \eta (y)} \le \abs{\eta}_{C^{0, 1}} \min (\abs{x -y}, R)\), we deduce that 
\begin{multline*}\iint\limits_{B}
  \min_{1 \le i \le \ell} \frac{(2 \abs{\eta (x) - \eta (y)} \abs{u (x)})^{p_i}}{\abs{x - y}^{n + s_i p_i}}
  \dif x \dif y\\
  \shoveleft{\le \C
\iint\limits_{B (0, R) \times \Rset^n} 
\min_{1 \le i \le \ell} \frac{(\abs{\eta (x) - \eta (y)} \abs{u (x)})^{p_i}}{\abs{x - y}^{n + s_i p_i}}\dif x \dif y }\\
\le
  \frac{\C}{R^n} 
  \iint\limits_{B(0, R) \times B (0, R)}
  \int\limits_{B (x, R)} \min_{1 \le i \le \ell} \frac{(\abs{\eta}_{C^{0, 1}} \abs{u (x) - u (z)})^{p_i}}{\abs{x - y}^{n - (1 - s_i) p_i}}\dif y \dif x \dif z\\
+ 
  \frac{\C}{R^n} 
  \iint\limits_{B(0, R) \times B (0, R)} 
  \int\limits_{\Rset^n \setminus B (x, R)} \min_{1 \le i \le \ell} \frac{(\abs{\eta}_{C^{0, 1}} R \abs{u (x) - u (z)})^{p_i}}{\abs{x - y}^{n + s_i p_i}}\dif y \dif x \dif z. 
\end{multline*}
We rewrite the integrals  with respect to \(y\) on the right-hand cite in spherical coordinates centred 
at the point \(x\) and we obtain
\begin{multline*}
  \iint\limits_{B} 
  \min_{1 \le i \le \ell} \frac{(\abs{\eta (x) - \eta (y)} \abs{u (x)})^{p_i}}{\abs{x - y}^{n + s_i p_i}}\dif x \dif y \\
  \le
  \frac{\C}{R^n} \iint\limits_{B(0, R) \times B (0, R)} \int_{0}^{R} \min_{1 \le i \le \ell} \frac{(\abs{\eta}_{C^{0, 1}} \abs{u (x) - u (z)})^{p_i}}{r^{1 - (1 - s_i) p_i}}\dif r \dif x \dif z\\
  + \frac{\C}{R^n} \iint\limits_{B(0, R) \times B (0, R)} \int_{R}^{+\infty} \min_{1 \le i \le \ell} \frac{(\abs{\eta}_{C^{0, 1}} R \abs{u (x) - u (z)})^{p_i}}{r^{1 + s_i p_i}}\dif r \dif x \dif z. 
\end{multline*}
By applying \cref{lemma_int_min_powers}, we deduce that 
\begin{multline}
  \label{eq_IiCioc7een4vaegeque}
  \iint\limits_{B}
   \min_{1 \le i \le \ell} \frac{(\abs{\eta (x) - \eta (y)} \abs{u (x)})^{p_i}}{\abs{x - y}^{n + s_i p_i}}\dif x \dif y\\
   \le
   \C \iint\limits_{B(0, R) \times B (0, R)} \min_{1 \le i \le \ell} \frac{(\abs{\eta}_{C^{0, 1}} R \abs{u (x) - u (z)})^{p_i}}{R^{n + s_i p_i}} \dif x \dif z\\
   \le 
   \C \iint\limits_{B(0, R) \times B (0, R)} \min_{1 \le i \le \ell} \frac{(\abs{\eta}_{C^{0, 1}} R \abs{u (x) - u (z)})^{p_i}}{\abs{x - z}^{n + s_i p_i}} \dif x \dif z.
\end{multline}
The conclusion follows from \eqref{eq_quieChei4wu9ienae}, \eqref{eq_oite1ohH6neingeuw1o} and \eqref{eq_IiCioc7een4vaegeque}.
\end{proof}

\begin{proof}[Proof of \cref{proposition_decomposition_Rn} in the general case]
  \resetconstant
  We assume without loss of generality that 
  \[
 \iint\limits_{\Rset^n \times \Rset^n}
  \min_{1 \le i \le \ell} \frac{\abs{u (x)-u (y)}^{p_i}}{\abs{x - y}^{n + s_i p_i}}
  \dif x \dif y < + \infty;
  \]  
  it follows then that \(u \in L^1_\mathrm{loc} (\Rset^n)\) (see \cref{lemma_integrability} below).
  We choose a function \(\eta \in C^1_c (\Rset^n)\) such that \(\eta = 0\) in \(\Rset^n \setminus B (0, 1)\) and \(\eta = 1\) on \(B (0, \frac{1}{2})\).
  We define for each \(R \in (0, +\infty)\), the function \(u^R : \Rset^n \to \Rset\), by setting for each \(x \in \Rset^n\)
  \[
  u^R (x) \defeq \eta \Bigl( \frac{x}{R}\Bigr) \biggl( u (x) - \fint_{B (0, R)} u\biggr).
  \]
  By \cref{lemma_cut_domain}, 
  we have for each \(R \in (0, +\infty)\),
  \begin{equation}
    \label{eq_feifieshiiXa9euphu1}
  \iint\limits_{\Rset^n \times \Rset^n}
  \min_{1 \le i \le \ell} \frac{\abs{u^R (x)- u^R (y)}^{p_i}}{\abs{x - y}^{n + s_i p_i}}
  \dif x \dif y
  \le 
  \C 
  \iint\limits_{B (0, R) \times B (0, R)}
  \min_{1 \le i \le \ell} \frac{\abs{u (x)- u (y)}^{p_i}}{\abs{x - y}^{n + s_i p_i}}
  \dif x \dif y.
\end{equation}
Moreover for each \(R \in (0, +\infty)\), since \(u^R = 0\) in \(\Rset^n \setminus B_R\), by \cref{lemma_Holder_min}
\[
\begin{split}
\min_{1 \le i \le \ell}
\Biggl( R^n
\fint_{B (0, R)} \abs{u^R}\Biggr)^{p_i}
&\le \C  \fint_{B (0, R)} \min_{1 \le i \le \ell} R^{n p_i} \abs{u^R}^{p_i}\\
&\le \C  \int_{B (0, 2R) \setminus B (0, R)} \int_{B (0, R)}  \min_{1 \le i \le \ell} \frac{\abs{u^R (x) - u^R (y)}^{p_i}}{R^{2 n-np_{i}}} \dif x \dif y \\
&\le \C \int_{B (0, 2 R) \setminus B (0, R)} \int_{B (0, R)}  \min_{1 \le i \le \ell}
\frac{\abs{u^R (x) - u^R (y)}^{p_i}}{R^{n-np_{i} - s_i p_i}\abs{x - y}^{n + s_i p_i}}\dif x \dif y \\
&<+\infty,
\end{split}
\]
and therefore
\[
\int_{\Rset^n} \frac{\abs{u^R (x)}}{1 + \abs{x}^n} \dif x 
=\int_{B (0, R)} \frac{\abs{u^R (x)}}{1 + \abs{x}^n} \dif x 
\le \int_{B (0,R)} \abs{u^R} < +\infty.
\]
By the first part of the proof, for each \(R \in (0, +\infty)\), there exist measurable functions \(u_1^R, \dotsc, u_\ell^R : \Rset^n \to \Rset\) such that \(u^R = u_1^R + \dotsb + u_\ell^R\) in \(\Rset^n\) and, by \eqref{eq_feifieshiiXa9euphu1},
\begin{equation}
  \label{eq_oosaip2ierieghaev2P}
  \begin{split}
\iint\limits_{\Rset^n \times \Rset^n}
\max_{1 \le i \le \ell}  \frac{\abs{u^R_i (x)-u^R_i (y)}^{p_i}}{\abs{x - y}^{n + s_i p_i}}
\dif x \dif y
&\le 
\C
\iint\limits_{\Rset^n \times \Rset^n}
\min_{1 \le i \le \ell} \frac{\abs{u^R (x)-u^R (y)}^{p_i}}{\abs{x - y}^{n + s_i p_i}}
\dif x \dif y\\
& \le \C  \iint\limits_{\Rset^n \times \Rset^n}
\min_{1 \le i \le \ell} \frac{\abs{u (x)- u (y)}^{p_i}}{\abs{x - y}^{n + s_i p_i}}
\dif x \dif y
.
\end{split}
\end{equation}

If \(R \ge 2\),  we can assume without loss of generality by adding suitable constants to the functions \(u_1, \dotsc, u_\ell\), that for every \(i \in \{1, \dotsc,\ell\}\),
\[
\int_{B (0, 1)} u^R_i = \kappa\defeq \frac{1}{\ell} \int_{B (0, 1)} u.
\]
This implies in turn that for every \(\rho > 0\),
\begin{equation}
  \label{eq_eacho7Aexoo4eGoDuh2}
\begin{split}
\int_{B (0, \rho)} \abs{u^R_i - \kappa}^{p_i}
&\le \C \iint\limits_{B (0, \rho)  \times B (0, 1)} \abs{u^R_i(x) - u^R_i (y)}^{p_i} \dif y \dif x\\
&\le \C \rho^{n + s_ip_i} \iint\limits_{B (0, \rho)  \times B (0, 1)} \frac{\abs{u^R_i(x) - u^R_i (y)}^{p_i}}{\abs{x - y}^{n +s_i p_i}} \dif y \dif x
\end{split}
\end{equation}
In view of \eqref{eq_oosaip2ierieghaev2P} and \eqref{eq_eacho7Aexoo4eGoDuh2}, for every \(\rho > 0\) and \(i \in \{1, \dotsc, \ell\}\), the family 
\((u^R_i)_{R > 2}\) is bounded in the space \(W^{s_i, p_i} (B (0, \rho))\).
By the Rellich compactness theorem in fractional Sobolev spaces \cite{DiNezza_Palatucci_Valdinoci2012}*{theorem 7.1}, 
there exist functions \(u_1,\dotsc, u_\ell : \Rset^n \to \Rset\) and a sequence \((R_m)_{m \in \Nset}\) diverging to \(+ \infty\) such that the sequence
\((u^{R_m}_i)_{m \in \Nset}\) converges almost everywhere to \(u_i\) in \(\Rset^n\).
This implies in particular that for almost every \(x \in \Rset^n\),
\[
u_1 (x) + \dotsb + u_\ell (x) = \lim_{m \to \infty} u_1^{R_m} (x) + \dotsb + u_\ell^{R_m} (x)
= \lim_{m \to \infty} u^{R_m} (x) = u (x).
\]
Finally, by Fatou's lemma and by \eqref{eq_oosaip2ierieghaev2P}, we have 
\begin{equation*}
  \begin{split}
  \iint\limits_{\Rset^n \times \Rset^n}
  \max_{1 \le i \le \ell}  \frac{\abs{u_i (x)-u_i (y)}^{p_i}}{\abs{x - y}^{n + s_i p_i}}
  \dif x \dif y
 &\le 
  \liminf_{m \to \infty}
  \iint\limits_{\Rset^n \times \Rset^n}
  \max_{1 \le i \le \ell}  \frac{\abs{u^{R_m}_i (x)-u^{R_m}_i (y)}^{p_i}}{\abs{x - y}^{n + s_i p_i}}
  \dif x \dif y\\
  &\le \C   \iint\limits_{\Rset^n \times \Rset^n}
  \min_{1 \le i \le \ell} \frac{\abs{u (x)- u (y)}^{p_i}}{\abs{x - y}^{n + s_i p_i}}
  \dif x \dif y.\qedhere
\end{split}
\end{equation*}

\end{proof}

\section{Decomposition of functions in bounded domains and on manifolds}

The aim of this section is to prove the counterpart of \cref{proposition_decomposition_Rn} in bounded domains and compact manifolds having possibly a boundary.
\begin{proposition}
  \label{proposition_decomposition_bounded_domains}
  Let \(n \in \Nset^*\), let \(\ell \in \Nset^*\), let \(s_1, \dotsc, s_\ell \in (0, 1)\) and let \(p_1, \dotsc, p_\ell \in [1, + \infty)\). Let \(\Omega\) be a bounded domain with a smooth boundary or a smooth compact manifolds with (a possibly empty) boundary.
  There exists a constant \(C\) such that for every
  measurable function \(u : \Omega \to \Rset\), 
  there exist measurable functions \(u_1, \dotsc, u_\ell : \Omega \to \Rset\) such that \(u = u_1 + \dotsb + u_\ell\) on \(\Omega\) and 
  \[
  \iint\limits_{\Omega \times \Omega}
  \max_{1 \le i \le \ell}  \frac{\abs{u_i (x)-u_i (y)}^{p_i}}{d (x, y)^{n + s_i p_i}}
  \dif x \dif y
  \le 
  C
  \iint\limits_{\Omega \times \Omega}
  \min_{1 \le i \le \ell} \frac{\abs{u (x)-u (y)}^{p_i}}{d (x, y)^{n + s_i p_i}}
  \dif x \dif y
  .
  \]
\end{proposition}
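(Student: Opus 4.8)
The plan is to reduce \cref{proposition_decomposition_bounded_domains} to the Euclidean decomposition \cref{proposition_decomposition_Rn} by localization. I would first fix a finite atlas of bi-Lipschitz charts \(\theta_k : V_k \to B (0, 3)\) (half-balls for boundary charts), \(k \in \{1, \dotsc, N\}\), with \(\bigcup_k V_k = \Omega\), together with a Lipschitz partition of unity \(\eta_1, \dotsc, \eta_N\) subordinate to \((V_k)_k\) satisfying \(\sum_k \eta_k = 1\) on \(\Omega\) and \(\theta_k (\supp \eta_k) \subset B (0, 1)\). On each chart the geodesic distance \(d_\Omega\) is comparable to the Euclidean distance transported by \(\theta_k\) (and the Jacobians are bounded), so the near-diagonal behaviour of every quantity is chart-independent up to constants. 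One may assume \(M \defeq \iint_{\Omega \times \Omega} \min_{1 \le i \le \ell} \cdots < +\infty\), hence \(u \in L^1 (\Omega)\) (by \cref{lemma_integrability}), and after subtracting a constant that \(\int_\Omega u = 0\).

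Next I would split \(u\) into a local part and a low-frequency part connecting the charts. Setting \(c_k \defeq \fint_{B (0, 3)} u \compose \theta_k^{-1}\) and \(w_k \defeq \eta_k \, (u - c_k \compose \theta_k)\), one has \(u = \sum_k w_k + g\) with \(g \defeq \sum_k (c_k \compose \theta_k)\, \eta_k\). For each \(k\), transplant \(w_k\) by \(\theta_k\), extend by zero, and apply \cref{proposition_decomposition_Rn} to obtain \(w_k = \sum_i w_{k, i}\) on \(\Rset^n\); the arising \(\iint_{\Rset^n \times \Rset^n} \min_i \cdots\) is bounded, by \cref{lemma_cut_domain} — applicable precisely because \(c_k\) is the mean of \(u \compose \theta_k^{-1}\) on \(B(0,3)\), which makes the cut-off by \(\eta_k \compose \theta_k^{-1}\) harmless — by \(C \iint_{V_k \times V_k} \min_i \cdots \le C M\). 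Multiplying the pieces \(w_{k, i}\) (each of which lies in \(\dot W^{s_i, p_i} (\Rset^n) \cap L^1_{\mathrm{loc}}\) because the corresponding maximum is finite) by a cut-off equal to \(1\) on \(B (0, 1)\) — a routine product estimate using the fractional Poincaré inequality on \(B(0,2)\), with each chart's leftover mean \(\fint_{B(0,2)} w_k\) routed into a single well-chosen piece (see below) — then transplanting back, and summing over \(k\), produces functions \(v_1, \dotsc, v_\ell\) on \(\Omega\) with \(u - g = \sum_i v_i\) and \(\iint_{\Omega \times \Omega} \max_i \cdots \le C M\); here one uses finiteness of \(N\), the bi-Lipschitz comparison of distances, and convexity of \(t \mapsto t^{p_i}\).

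The remaining task — decomposing the connecting low-frequency function \(g = \sum_k (c_k \compose \theta_k)\, \eta_k\) — is the main obstacle. Since the \(\eta_k\) are fixed Lipschitz functions and \(\Omega\) has finite diameter, for any reals \((d_{k, i})\) the function \(g_i \defeq \sum_k d_{k, i}\, \eta_k\) satisfies, using \(g_i (x) - g_i (y) = \sum_k d_{k, i} (\eta_k (x) - \eta_k (y))\) and hence \(\abs{g_i (x) - g_i (y)} \le C (\sum_k \abs{d_{k, i}}) \min (d_\Omega (x, y), 1)\),
\[
\iint\limits_{\Omega \times \Omega} \max_{1 \le i \le \ell} \frac{\abs{g_i (x) - g_i (y)}^{p_i}}{d_\Omega (x, y)^{n + s_i p_i}} \dif x \dif y \le C \max_{1 \le i \le \ell} \Bigl( \sum_{k = 1}^{N} \abs{d_{k, i}} \Bigr)^{p_i} ,
\]
so it suffices to split each \(c_k = \sum_i d_{k, i}\) with \(\max_i (\sum_k \abs{d_{k, i}})^{p_i} \le C M\). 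The key is a discrete estimate: because \(\Omega\) is connected and the charts overlap, \cref{lemma_Holder_min} applied on \(V_k \cap V_j\) for adjacent \(k, j\) (together with the trivial oscillation bound for means) gives \(\min_i (\abs{c_k - c_j} / C)^{p_i} \le C M\); chaining this along a spanning tree of the chart adjacency graph and using \(\int_\Omega u = 0\) yields, for every \(k\), an index \(i_0 (k)\) with \(\abs{c_k} \le C M^{1/p_{i_0 (k)}}\). The greedy choice \(d_{k, i} \defeq c_k\) if \(i = i_0 (k)\) and \(d_{k, i} \defeq 0\) otherwise then gives \(\sum_k \abs{d_{k, i}} = \sum_{k : i_0 (k) = i} \abs{c_k} \le C N M^{1/p_i}\), hence \(\max_i (\sum_k \abs{d_{k, i}})^{p_i} \le C M\) — the powers of \(M\) cancel exactly, which is what a naive ``put \(g\) into a single space'' argument fails to achieve, and the same routing of means is what makes the cut-off step of the previous paragraph work. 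Finally \(u_i \defeq v_i + g_i\) gives \(u = \sum_i u_i\) with the claimed estimate. I expect the delicate points to be precisely this discrete \(\ell^{p_i}\)-type estimate for the chart means with the correct homogeneity in \(M\), and the bookkeeping that keeps every auxiliary constant in the correct one of the \(\ell\) pieces; the rest is transplantation built on \cref{proposition_decomposition_Rn}, \cref{lemma_cut_domain} and \cref{lemma_Holder_min}.
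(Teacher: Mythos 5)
Your overall plan---cover $\Omega$ by bi-Lipschitz charts, subtract chart means, cut off by a Lipschitz partition of unity, decompose each localized piece via \cref{proposition_decomposition_Rn} and \cref{lemma_cut_domain}, and then treat the ``connecting'' low-frequency function built from the chart means separately---is precisely the strategy of the paper's proof. The differences are in the execution, and they are worth commenting on.

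First, the low-frequency part. You set up an elaborate mechanism (spanning tree of the chart adjacency graph, a chart-dependent index $i_0(k)$, a greedy assignment $d_{k,i}$) on the ground that a naive ``put $g$ into a single $\dot W^{s_{i^*},p_{i^*}}$'' argument ``fails to achieve'' the correct homogeneity in $M$. This is not so, and the paper uses the single-piece argument. The reason it works is exactly \cref{lemma_Holder_min}: writing $A \defeq \iint_{\Omega\times\Omega}\abs{u(x)-u(y)}\dif x\dif y$, one has $\abs{\Tilde u_0(x)-\Tilde u_0(y)}\le C\,A\, d_\Omega(x,y)$, hence for each $i$ the $i$-th Gagliardo integral of $\Tilde u_0$ is bounded by $C\,A^{p_i}$; then $\min_i A^{p_i}\le C\,M$ by \cref{lemma_Holder_min} (together with the elementary observation that $a\le\lambda b$, $\lambda\ge 1$, implies $\min_i a^{p_i}\le\lambda^{\max_i p_i}\min_i b^{p_i}$). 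Picking the index $i^*$ realizing this minimum and absorbing all of $\Tilde u_0+\fint_\Omega u$ into the $i^*$-th summand gives exactly the cancellation of powers of $M$ that you attribute only to the greedy assignment. Your argument, pushed through, produces the same bound for every $k$, namely $\abs{c_k}\le C N\max_i M^{1/p_i}$, so the greedy indices $i_0(k)$ can in fact all be taken equal, and your construction collapses to the single-piece one. The spanning tree is unnecessary, and it would moreover force you to keep track of a combinatorial adjacency structure that the simple bound $\abs{\fint_{U_k}u-\fint_\Omega u}\le C\iint_{\Omega\times\Omega}\abs{u(x)-u(y)}$ avoids entirely.

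Second, you apply \cref{proposition_decomposition_Rn} directly to the cut-off chart functions, whereas the paper first proves the ball case separately (using the reflection extension \cref{lemma_extension_ball}) and then invokes that on each chart, observing that a half-ball is a bi-Lipschitz image of a ball. Your shortcut is legitimate, but you should say something about boundary charts, where the chart image is a half-ball and the mean-zero normalization and cut-off must be set up so that \cref{lemma_cut_domain} applies; the paper's detour through the ball case is precisely what handles this cleanly. Third, the step in which you multiply the pieces $w_{k,i}$ (defined on all of $\Rset^n$) by a cut-off and ``route each chart's leftover mean into a single well-chosen piece'' is the delicate point and is left too vague: the paper resolves it by normalizing each $v^k_i\compose\psi_k$ to have mean $\frac1\ell\fint_{U_k}u$ on $U_k$ and then applying \cref{lemma_cut_domain} to the centred function $\varphi_k\,(v^k_i\compose\psi_k-\fint_{U_k}v^k_i\compose\psi_k)$; the means thereby recombine exactly into the low-frequency term $\Tilde u_0+\fint_\Omega u$ without any extra routing. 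With those adjustments your proof is essentially the paper's.
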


The constant in the previous proposition depends on the domain or the manifold \(\Omega\) and also on the number \(\ell\) and the parameters \(s_1, \dotsc, s_\ell\) and \(p_1, \dotsc, p_\ell\) in the same way as in proposition \ref{proposition_decomposition_Rn}.

We first remark that the boundedness of the integral in the right hand-side of \cref{proposition_decomposition_bounded_domains} implies integrability and thus it will make sense to prescribe average values.

\begin{lemma} 
  \label{lemma_integrability}
Let \(n \in \Nset^*\), \(\ell \in \Nset^*\), \(s_1, \dotsc, s_\ell \in (0, 1)\) and \(p_1, \dotsc, p_\ell \in [1, + \infty)\). 
If \(\Omega\) is a bounded manifold, then there exists a constant \(C\) such that 
\[
\iint\limits_{\Omega \times \Omega}
\abs{u (x) - u (y)} \dif x \dif y 
\le 
C \Biggl( 1 + 
\iint\limits_{\Omega \times \Omega}
\min_{1 \le i \le \ell}  \frac{\abs{u (x)-u (y)}^{p_i}}{d (x, y)^{n + s_i p_i}}
\dif x \dif y\Biggr).
\]
\end{lemma}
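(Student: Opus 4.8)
The plan is to split the integration over $\Omega \times \Omega$ according to the size of the increment $\abs{u (x) - u (y)}$, with threshold $1$. On the set $\{(x, y) \in \Omega \times \Omega \st \abs{u (x) - u (y)} \le 1\}$ one bounds the integrand of the left-hand side trivially by $1$, so that this part contributes at most the finite volume of $\Omega \times \Omega$; this is exactly what the additive constant $1$ on the right-hand side is meant to absorb, and it is also the reason no such inequality can hold without that constant, since when $p_i > 1$ the quantity $\min_{i} \abs{u (x)-u (y)}^{p_i}$ is far smaller than $\abs{u (x)-u (y)}$ for small increments.

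On the complementary set $\{(x, y) \in \Omega \times \Omega \st \abs{u (x) - u (y)} > 1\}$ I would use two elementary observations. First, since $\Omega$ is bounded, its diameter is finite, so there is a constant $M$ depending only on $n$, on $s_1, \dotsc, s_\ell$, on $p_1, \dotsc, p_\ell$ and on $\Omega$ such that $d (x, y)^{n + s_i p_i} \le M$ for all $x, y \in \Omega$ and all $i \in \{1, \dotsc, \ell\}$; consequently, for every $i$,
\[
\frac{\abs{u (x) - u (y)}^{p_i}}{d (x, y)^{n + s_i p_i}} \ge \frac{1}{M} \abs{u (x) - u (y)}^{p_i} \ge \frac{1}{M} \min_{1 \le j \le \ell} \abs{u (x) - u (y)}^{p_j},
\]
and taking the minimum over $i$ on the left-hand side keeps the inequality. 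Second, since each $p_i \ge 1$ and $\abs{u (x) - u (y)} > 1$ on this set, one has $\abs{u (x) - u (y)}^{p_i} \ge \abs{u (x) - u (y)}$ for every $i$, hence $\min_{1 \le j \le \ell}\abs{u (x) - u (y)}^{p_j} \ge \abs{u (x) - u (y)}$. Combining the two yields $\abs{u (x) - u (y)} \le M \min_{1 \le i \le \ell} \frac{\abs{u (x) - u (y)}^{p_i}}{d (x, y)^{n + s_i p_i}}$ on the set $\{\abs{u (x) - u (y)} > 1\}$.

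Integrating this pointwise bound over $\{\abs{u (x) - u (y)} > 1\}$ and then, since the right-hand integrand is nonnegative, enlarging the domain of integration to all of $\Omega \times \Omega$ produces the second term on the right. Adding the two contributions and taking $C \defeq \max (\abs{\Omega \times \Omega}, M)$ concludes. There is no genuine obstacle here — this is a soft integrability statement — the only points requiring care are to work with the Riemannian volume measure throughout and to observe that it is precisely the threshold $1$, together with the hypotheses $p_i \ge 1$ and the boundedness of $\Omega$ (which gives both $\abs{\Omega \times \Omega} < +\infty$ and a finite diameter), that makes the splitting argument go through.
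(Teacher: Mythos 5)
Your argument is correct and is essentially the paper's own: the paper records the same two facts (boundedness of $\Omega$ gives finite measure and a uniform bound on $d(x,y)^{n+s_ip_i}$, and $p_i\ge 1$ gives $t\le 1+t^{p_i}$) as the single pointwise inequality $\abs{u(x)-u(y)}\le C\bigl(1+\abs{u(x)-u(y)}^{p_i}/d(x,y)^{n+s_ip_i}\bigr)$, then takes the minimum over $i$ and integrates. Your case split on $\abs{u(x)-u(y)}\lessgtr 1$ is just an unrolled presentation of that same pointwise bound.
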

\begin{proof}
  \resetconstant
  We observe that since \(\Omega\) is bounded and \(p_i \ge 1\), we have for every \(i \in \{1, \dotsc, \ell\}\),
\[
\abs{u(x) - u (y)}
\le \C 
\Bigl(1 + \frac{\abs{u (x)-u (y)}^{p_i}}{d (x, y)^{n + s_i p_i}}\Bigr),
\]
and hence the inequality follows by taking the minimum over \(i \in \{1, \dotsc, \ell\}\) and by integrating over \(\Omega \times \Omega\).
\end{proof}

\begin{lemma}
  \label{lemma_extension_ball}
Let \(n \in \Nset^*\), let \(\ell \in \Nset^*\), let \(s_1, \dotsc, s_\ell \in (0, 1)\) and let \(p_1, \dotsc, p_\ell \in [1, + \infty)\). 
There exists a constant \(C\) such that for every measurable function \(u : B (0, R) \to \Rset\) is measurable, there exists a measurable function 
\(\Tilde{u} : B (0, 2 R) \to \Rset\), such that \(\Tilde{u} = u\) in \(B (0, R)\) and 
\[
  \iint\limits_{B (0, 2R) \times B (0, 2R) }
  \min_{1 \le i \le \ell}  \frac{\abs{\Tilde{u} (x)-\Tilde{u} (y)}^{p_i}}{\abs{x - y}^{n + s_i p_i}}
  \dif x \dif y
  \le 
  C 
  \iint\limits_{B (0, R) \times B (0, R)}
  \min_{1 \le i \le \ell}  \frac{\abs{u (x)- u (y)}^{p_i}}{\abs{x - y}^{n + s_i p_i}}
  \dif x \dif y.
\]
\end{lemma}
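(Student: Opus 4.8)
The plan is to extend $u$ by reflection through the sphere $\partial B(0,R)$. Let $\Phi : \Rset^n \setminus \{0\} \to \Rset^n \setminus \{0\}$ be the inversion $\Phi(x) \defeq R^2 x/\abs{x}^2$; it is a smooth involution that maps the annulus $A \defeq B(0,2R) \setminus \overline{B(0,R)}$ diffeomorphically onto $B(0,R) \setminus \overline{B(0,R/2)} \subset B(0,R)$, with $\abs{\det D\Phi(x)} = (R/\abs{x})^{2n}$ and $\abs{\Phi(a) - \Phi(b)} = R^2\abs{a-b}/(\abs{a}\abs{b})$. I would then define $\Tilde u \defeq u$ on $B(0,R)$ and $\Tilde u \defeq u \compose \Phi$ on $A$ (the value on $\partial B(0,R)$ is irrelevant, being a null set); this function is measurable and coincides with $u$ on $B(0,R)$.

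The heart of the argument is the geometric estimate: for every $x, z \in \Rset^n$ with $0 < \abs{x} \le R$ and $0 < \abs{z} \le R$ one has $\abs{x - \Phi(z)} \ge \abs{x - z}$. To prove it, set $t_* \defeq x \cdot z/\abs{z} \le \abs{x} \le R$ and observe that $\abs{x - t z/\abs{z}}^2 = (t - t_*)^2 + (\abs{x}^2 - t_*^2)$ for every $t \in \Rset$, so that $\abs{x - z}^2 = (\abs{z} - t_*)^2 + (\abs{x}^2 - t_*^2)$ while $\abs{x - \Phi(z)}^2 = (R^2/\abs{z} - t_*)^2 + (\abs{x}^2 - t_*^2)$. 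Since $R^2/\abs{z} \ge R \ge t_*$, it is enough to check that $R^2/\abs{z} - t_* \ge \abs{\abs{z} - t_*}$, that is $R^2/\abs{z} + \abs{z} \ge 2 t_*$; and this holds because $t_* \le R$ and $R^2/\abs{z} + \abs{z} - 2R = (R - \abs{z})^2/\abs{z} \ge 0$. (It is pleasant that the constant here is exactly $1$, precisely because both points lie in $B(0,R)$.)

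With this in hand I would split $\iint_{B(0,2R) \times B(0,2R)}$ into the three regions $B(0,R) \times B(0,R)$, $A \times A$, and $B(0,R) \times A$ together with its symmetric copy $A \times B(0,R)$. On $B(0,R) \times B(0,R)$ the integrand is already the one appearing on the right-hand side, since $\Tilde u = u$ there. On $A \times A$, the change of variables $(x,y) = (\Phi(x'),\Phi(y'))$ sends the integral to one over $(B(0,R) \setminus \overline{B(0,R/2)})^2 \subset B(0,R) \times B(0,R)$; since $\abs{x},\abs{y} \in [R,2R]$ the product of the Jacobians is at most $16^n$, and because $\abs{x-y} = R^2\abs{x'-y'}/(\abs{x'}\abs{y'}) \ge \abs{x'-y'}$ one has $\abs{x-y}^{-(n+s_ip_i)} \le \abs{x'-y'}^{-(n+s_ip_i)}$ for every $i$, so the denominators only improve term by term and this piece is bounded by $16^n$ times the right-hand side. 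On $B(0,R) \times A$ I would substitute $y = \Phi(y')$ with $y' \in B(0,R) \setminus \overline{B(0,R/2)}$; the Jacobian is at most $4^n$, and the geometric estimate gives $\abs{x - \Phi(y')} \ge \abs{x - y'}$, hence $\min_i \abs{u(x)-u(y')}^{p_i}/\abs{x-\Phi(y')}^{n+s_ip_i} \le \min_i \abs{u(x)-u(y')}^{p_i}/\abs{x-y'}^{n+s_ip_i}$, so this piece too is bounded by a multiple of the right-hand side integral. Adding the contributions yields the conclusion with a constant depending only on $n$.

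I do not expect a serious obstacle. The only delicate points are the geometric inequality above and the observation that the comparison constants between $\abs{x-y}$ and $\abs{\Phi^{-1}(x)-\Phi^{-1}(y)}$ in the two off-diagonal regions enter with the favourable sign, so that no hypothesis on the exponents $s_i$ or $p_i$ is needed and the construction is automatically invariant under the scaling $x \mapsto \lambda x$, giving a constant independent of $R$.
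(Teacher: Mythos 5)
Your proof is correct and follows the same approach as the paper: extension by the Kelvin-type inversion $x \mapsto R^2 x/\abs{x}^2$ followed by splitting the integral over $B(0,2R)^2$ into the three blocks and changing variables back to $B(0,R)$ on the two blocks touching the annulus. The paper's own proof is stated very tersely (``the conclusion follows from suitable changes of variables''); you supply the details it omits, in particular the clean geometric inequality $\abs{x-\Phi(z)} \ge \abs{x-z}$ for $x,z \in B(0,R)$, which is exactly the favourable-sign comparison the argument needs.
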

\begin{proof}
  We define the function \(\Tilde{u} : B (0, 2R) \to \Rset\) for each \(x \in B (0, 2R)\) by 
\[
\Tilde{u} (x)
\defeq 
\left\{
\begin{aligned}
  &u (x) &&\text{if \(x \in B (0, R)\)},\\
  &u \bigl(\tfrac{R^2}{\abs{x}^2} x \bigr) &&\text{if \(x \in B (0, 2R) \setminus B (0, R)\)}.
\end{aligned}
\right.
\]
We compute
\[
\begin{split}
\iint\limits_{B (0, 2R) \times B (0, 2R) }&
\min_{1 \le i \le \ell}  \frac{\abs{\Tilde{u} (x)-\Tilde{u} (y)}^{p_i}}{\abs{x - y}^{n + s_i p_i}}
\dif x \dif y\\
&= \iint\limits_{B (0, R) \times B (0, R) }
\min_{1 \le i \le \ell}  \frac{\abs{\Tilde{u} (x)-\Tilde{u} (y)}^{p_i}}{\abs{x - y}^{n + s_i p_i}}
\dif x \dif y\\
&\qquad + 2\iint\limits_{B (0, R) \times (B (0, 2R) \setminus B (0, R))}
\min_{1 \le i \le \ell}  \frac{\abs{\Tilde{u} (x)-\Tilde{u} (y)}^{p_i}}{\abs{x - y}^{n + s_i p_i}}
\dif x \dif y\\
& \qquad + \iint\limits_{(B (0, 2R) \setminus B (0, R)) \times (B (0, 2R) \setminus B (0, R))}
\min_{1 \le i \le \ell}  \frac{\abs{\Tilde{u} (x)-\Tilde{u} (y)}^{p_i}}{\abs{x - y}^{n + s_i p_i}}
\dif x \dif y,
\end{split}
\]
and the conclusion follows from suitable changes of variables from \(B (0, 2R) \setminus B (0, R)\) to \(B (0, R) \setminus B(0, R/2)\).
\end{proof}

\begin{proof}[Proof of \cref{proposition_decomposition_bounded_domains} when \(\Omega = B (0, R)\)]
  \resetconstant
Let \(u : B (0, R) \to \Rset\) be a measurable function and 
let \(\Tilde{u} : B (0, 2R) \to \Rset\) be the extension given by \cref{lemma_extension_ball}.
We define the function
\(v \defeq \eta (\Tilde{u} - \fint_{B (0, 2R)} \Tilde{u}) : \Rset^n \to \Rset\), 
where the function \(\eta \in C^1_c (\Rset^n)\) satisfies \(\eta = 1\) in \(B (0, R)\) and \(\eta = 0\) on \(\Rset^n \setminus B (0, 2R)\).
By \cref{lemma_cut_domain}, we have 
\[
 \iint\limits_{\Rset^n \times \Rset^n}
 \min_{1 \le i \le \ell}  \frac{\abs{v (x)- v (y)}^{p_i}}{\abs{x - y}^{n + s_i p_i}}
 \dif x \dif y
 \le 
 \C 
 \iint\limits_{B (0, R) \times B (0, R) }
 \min_{1 \le i \le \ell}  \frac{\abs{u (x)- u (y)}^{p_i}}{\abs{x - y}^{n + s_i p_i}}
 \dif x \dif y,
\]
and \(v = u - \fint_{B (0, 2R)} \Tilde{u}\) in \(B (0, R)\).
Let \(v_1, \dotsc, v_\ell : \Rset^n \to \Rset\) be measurable functions given by \cref{proposition_decomposition_Rn} such that \(v_1 + \dotsb + v_\ell = v\) and 
\[
\iint\limits_{\Rset^n \times \Rset^n}
\max_{1 \le i \le \ell}  \frac{\abs{v_i (x)-v_i (y)}^{p_i}}{\abs{x - y}^{n + s_i p_i}}
\dif x \dif y
\le 
C
\iint\limits_{\Rset^n \times \Rset^n}
\min_{1 \le i \le \ell} \frac{\abs{v (x)- v (y)}^{p_i}}{\abs{x - y}^{n + s_i p_i}}
\dif x \dif y
.
\]
We conclude by setting 
\(u_i \defeq (v_i + \frac{1}{\ell} \fint_{B (0, 2 R)} \Tilde{u})\vert_{B (0, R)}\).
\end{proof}

\begin{proof}[Proof of \cref{proposition_decomposition_bounded_domains} in the general case]
  \resetconstant
Since \(\Omega\) is a compact Lipschitz manifold, there exist 
\(N \in \Nset\), and for \(k \in \{1, \dotsc, N\}\), a bi-Lipschitz homeomorphism \(\psi_k : U_k \to \Rset^m\) such that  either \(\psi_k (U_k) = B (0, 1) \subset \Rset^n\) or \(\psi_k (U_k) = B (0, 1) \cap \Rset^{n - 1} \times [0, +\infty)\)
and such that \(\Omega=\cup_{k=1}^N U_k\) . 
We take a partition of unity \((\varphi_k)_{1 \le k \le N}\) associated to the sets \(U_k\), that is, for every \(k \in \{1, \dotsc, N\}\), \(\varphi_k \in C^1 (\Omega)\) and \(\varphi_k = 0\) in \(\Omega \setminus U_k\),
and \(\sum_{i = 1}^{N} \varphi_k = 1\).
For each \(k \in \{1, \dotsc, N\}\) we define the function \(v^k\defeq u \compose \psi_k^{-1} : \psi_k (U_k) \to \Rset\). 
By the change of variable formula on a Riemannian manifold, we have for each \(k \in \{1, \dotsc, N\}\), 
\begin{multline*}
\iint\limits_{\psi_k (U_k)\times \psi_k (U_k)} 
  \min_{1\leq i\leq \ell}\frac{|v^k(x)-v^k(y)|^{p_i}}{\abs{x - y}^{n+s_ip_i}} 
  \dif x \dif y \\
    =
  \iint\limits_{U_k \times U_k}  \min_{1\leq i\leq \ell}
  \frac{|u(x)-u(y)|^{p_i}}{| \psi_k(x)-\psi_k(y)|^{n+s_ip_i}}
  \,
  J \psi_k (x) \,J \psi_k (y)\, \dif x \dif y
  ,
\end{multline*}
where the Jacobian is defined for \(x \in \Omega\) as \(J \psi_k (x) \defeq \det ([D \psi_k(x)]^* \compose D \psi_k(x))\),
with the adjoint \([D \psi_k(x)]^*\) being computed with respect to the Euclidean metric and the Riemannian metric on \(\Omega\).
Since \(\psi_k\) is bi-Lipschitz the Jacobian \(J \psi_k \) is bounded and \(d_\Omega (x,y) \leq \C \abs{ \psi_k(x)-\psi_k(y)}\). Thus
\begin{equation*}
  \begin{split}
\iint\limits _{\psi_k (U_k)\times \psi_k (U_k)} \min_{1\leq i\leq \ell}\frac{|v^k(x)-v^k(y)|^{p_i}}{\abs{x - y}^{n+s_ip_i}} \dif x \dif y 
& \leq  \C \iint\limits_{U_k \times U_k} \min_{1\leq i\leq \ell} \frac{|u(x)-u(y)|^{p_i}}{d_\Omega(x,y)^{n+s_ip_i}}\dif x \dif y \\
& \leq \C \iint\limits_{\Omega \times \Omega} \min_{1\leq i\leq \ell} \frac{|u(x)-u(y)|^{p_i}}{d_\Omega(x,y)^{n+s_ip_i}} \dif x \dif y.
\end{split}
\end{equation*}

Since the proposition is proved on a ball and the set \(\psi_k (U_k)\) is either a ball or a half-ball which is 
the image of a ball under a bi-Lipschitz homeomorphisms, for every \(k \in \{1, \dotsc, N\}\), 
there exist measurable functions \(v^k_1,\dotsc,v^k_\ell : \psi_k (U_k) \to \Rset\) such that 
\(v^k = v^k_1 + \dotsb + v^k_\ell\) on \(\psi_k (U_k)\) and 
\[
\iint\limits_{\psi_k (U_k)\times \psi_k (U_k)}\!\! \max_{1\leq i\leq \ell}\frac{|v^k_i(x)-v^k_i(y)|^{p_i}}{\abs{x - y}^{n+s_ip_i}} \dif x \dif y 
\le \C \!\!
\iint\limits_{\psi_k (U_k)\times \psi_k (U_k)} \!\! \min_{1\leq i\leq \ell}\frac{|v^k(x)-v^k(y)|^{p_i}}{\abs{x - y}^{n+s_ip_i}} \dif x \dif y. 
\]
Moreover, we can assume that for each \(k \in \{1, \dotsc, N\}\) and \(i \in \{1, \dotsc, \ell\}\) we have
\begin{equation}
  \fint_{U_k} v^k_i\compose \psi_k = \frac{1}{\ell} \fint_{U_k} u.
\end{equation}
We define for each \(i \in \{1, \dotsc, \ell\}\) the function
\[
\Tilde{u}_i \defeq \sum_{k = 1}^N \varphi_k \Bigl(v^k_i \compose \psi_k - \fint_{U_k} v^k_i\compose \psi_k \Bigr).
\]
Since the map \(\psi_k\) is bi-Lipschitz and by \cref{lemma_cut_domain}, we have 
\[
\begin{split}
\iint\limits_{\Omega \times \Omega} \frac{|\Tilde{u}_i(x)-\Tilde{u}_i(y)|^{p_i}}{d_\Omega( x, y)^{n+s_i p_i}} \dif x \dif y 
&\le \C \sum_{k = 1}^N \iint\limits_{U_k \times U_k} \frac{|\Tilde{u}_i(x)-\Tilde{u}_i(y)|^{p_i}}{d_\Omega (x, y)^{n+s_i p_i}} \dif x \dif y \\
&\le \C \sum_{k = 1}^N \; \iint\limits_{\psi_k (U_k)\times \psi_k (U_k)} \frac{|v^k_i(x)-v^k_i(y)|^{p_i}}{\abs{x - y}^{n+s_ip_i}} \dif x \dif y.
\end{split}
\]
If we define the low frequency component
\[
  \Tilde{u}_0 \defeq \sum_{k = 1}^N \varphi_k \biggl(\fint_{U_k} u - \fint_{\Omega} u\biggr),
\]
we have on \(\Omega\)
\[
u = \fint_{\Omega} u + \Tilde{u}_0 + \sum_{i = 1}^\ell \Tilde{u}_i.
\]

We compute now for each \(k \in \{1, \dotsc, N\}\),
\[
\biggabs{\fint_{U_k} u - \fint_{\Omega} u}
\le 
\C \iint\limits_{\Omega \times \Omega} \abs{u (x) - u (y)},
\]
and thus by \cref{lemma_Holder_min}, 
\[
\begin{split}
\min_{1 \le i \le \ell}
\norm{\nabla \Tilde{u}_0}_{L^{+\infty}}^{p_i}
&\le \C \iint\limits_{\Omega \times \Omega} \min_{1 \le i \le \ell} \abs{u (x) - u (y)}^{p_i} \dif x \dif y\\
& \le \C \iint\limits_{\Omega \times \Omega} \min_{1 \le i \le \ell} \frac{\abs{u (x) - u (y)}^{p_i}}{d_\Omega (x, y)^{n + s_ip_i}} \dif x \dif y.
\end{split}
\]
Since for every \(i \in \{1, \dotsc, \ell\}\),
\[
\iint\limits_{\Omega \times \Omega} \frac{\abs{\Tilde{u}_0 (x) - \Tilde{u}_0 (y)}^{p_i}}{d_\Omega (x, y)^{n + s_ip_i}} \dif x \dif y
\le 
\C
\iint \limits_{\Omega \times \Omega} \frac{ \norm{\nabla \Tilde{u}_0}_{L^{+\infty}}^{p_i}}{d_\Omega (x, y)^{n  - (1 -s_i)p_i}} \dif x \dif y
\le 
\C \norm{\nabla \Tilde{u}_0}_{L^{+\infty}}^{p_i},
\]
it follows then that 
\[
\begin{split}
  \min_{1 \le i \le \ell}
\iint\limits_{\Omega \times \Omega} \frac{\abs{\Tilde{u}_0 (x) - \Tilde{u}_0 (y)}^{p_i}}{d_\Omega (x, y)^{n + s_ip_i}} \dif x \dif y
&\le \C \iint\limits_{\Omega \times \Omega} \min_{1 \le i \le \ell} \abs{u (x) - u (y)}^{p_i} \dif x \dif y\\
&\le \C \iint\limits_{\Omega \times \Omega} \min_{1 \le i \le \ell} \frac{\abs{u (x) - u (y)}^{p_i}}{d_\Omega (x, y)^{n + s_ip_i}} \dif x \dif y.
\end{split}
\]
The conclusion then follows.
\end{proof}

\begin{bibdiv}
  \begin{biblist}
    
    \bib{Bethuel_Chiron2007}{incollection}{
      author={Bethuel, Fabrice},
      author={Chiron, David},
      title={Some questions related to the lifting problem in {S}obolev
        spaces},
      date={2007},
      booktitle={Perspectives in nonlinear partial differential equations},
      series={Contemp. Math.},
      volume={446},
      publisher={Amer. Math. Soc., Providence, RI},
      pages={125\ndash 152},
      doi={10.1090/conm/446/08628},
    }
    
    \bib{Bourgain_Brezis2003}{article}{
      author={Bourgain, Jean},
      author={Brezis, Ha\"{i}m},
      title={On the equation {${\rm div}\, Y=f$} and application to control of
        phases},
      date={2003},
      ISSN={0894-0347},
      journal={J. Amer. Math. Soc.},
      volume={16},
      number={2},
      pages={393\ndash 426},
      doi={10.1090/S0894-0347-02-00411-3},
    }

    \bib{Bourgain_Brezis_Mironescu2000}{article}{
   author={Bourgain, Jean},
   author={Brezis, Haim},
   author={Mironescu, Petru},
   title={Lifting in Sobolev spaces},
   journal={J. Anal. Math.},
   volume={80},
   date={2000},
   pages={37--86},
   issn={0021-7670},
   doi={10.1007/BF02791533},
}
    \bib{Coifman_Fefferman_1974}{article}{
      author={Coifman, R. R.},
      author={Fefferman, C.},
      title={Weighted norm inequalities for maximal functions and singular
        integrals},
      journal={Studia Math.},
      volume={51},
      date={1974},
      pages={241--250},
      issn={0039-3223},
      doi={10.4064/sm-51-3-241-250},
    }
    \bib{DiNezza_Palatucci_Valdinoci2012}{article}{
      author={Di~Nezza, Eleonora},
      author={Palatucci, Giampiero},
      author={Valdinoci, Enrico},
      title={Hitchhiker's guide to the fractional {S}obolev spaces},
      date={2012},
      ISSN={0007-4497},
      journal={Bull. Sci. Math.},
      volume={136},
      number={5},
      pages={521\ndash 573},
      doi={10.1016/j.bulsci.2011.12.004},
    }
    
    \bib{Gagliardo_1957}{article}{
      author={Gagliardo, Emilio},
      title={Caratterizzazioni delle tracce sulla frontiera relative ad alcune
        classi di funzioni in \(n\) variabili},
      journal={Rend. Sem. Mat. Univ. Padova},
      volume={27},
      date={1957},
      pages={284--305},
      issn={0041-8994},
    }    
    
    \bib{Hardy_Littlewood_Polya_1952}{book}{
      author={Hardy, G. H.},
      author={Littlewood, J. E.},
      author={P\'{o}lya, G.},
      title={Inequalities},
      edition={2},
      publisher={Cambridge University Press},
      date={1952},
      pages={xii+324},
    }

    \bib{Mironescu2008}{article}{
      author={Mironescu, Petru},
      title={Lifting default for {$\mathbb{S}^1$}-valued maps},
      date={2008},
      ISSN={1631-073X},
      journal={C. R. Math. Acad. Sci. Paris},
      volume={346},
      number={19--20},
      pages={1039\ndash 1044},
      doi={10.1016/j.crma.2008.08.001},
    }

    \bib{Mironescu_preprint}{article}{
      author={Mironescu, Petru},
      title={Lifting of \(\mathbb{S}^1\)-valued maps in sums of Sobolev spaces},
      eprint={https://hal.archives-ouvertes.fr/hal-00747663},
    }
    
    \bib{Mironescu_Russ2015}{article}{
      author={Mironescu, Petru},
      author={Russ, Emmanuel},
      title={Traces of weighted Sobolev spaces. Old and new},
      date={2015},
      ISSN={0362-546X},
      journal={Nonlinear Anal.},
      volume={119},
      pages={354\ndash 381},
      doi={10.1016/j.na.2014.10.027},
    }
    
    \bib{Nguyen2008}{article}{
      author={Nguyen, Hoai-Minh},
      title={Inequalities related to liftings and applications},
      date={2008},
      ISSN={1631-073X},
      journal={C. R. Math. Acad. Sci. Paris},
      volume={346},
      number={17-18},
      pages={957\ndash 962},
      doi={10.1016/j.crma.2008.07.026},
    }
    
    \bib{Stein1993}{book}{
      author={Stein, Elias~M.},
      title={Harmonic analysis: real-variable methods, orthogonality, and
        oscillatory integrals},
      series={Princeton Mathematical Series},
      publisher={Princeton University Press}, 
      address={Princeton, N.J.},
      date={1993},
      volume={43},
      ISBN={0-691-03216-5},
      contribution={with the assistance of T. S. Murphy},
    }
    
    \bib{Uspenskii}{article}{
      author={Uspenski\u{\i}, S. V.},
      title={Imbedding theorems for classes with weights},
      language={Russian},
      journal={Trudy Mat. Inst. Steklov.},
      volume={60},
      date={1961},
      pages={282--303},
      issn={0371-9685},
      translation={
        journal={Am. Math. Soc. Transl.},
        volume={87},
        pages={121--145},
        date={1970},
      },
  }
    
    \bib{Zhou2015}{article}{
   author={Zhou, Yuan},
   title={Fractional Sobolev extension and imbedding},
   journal={Trans. Amer. Math. Soc.},
   volume={367},
   date={2015},
   number={2},
   pages={959--979},
   issn={0002-9947},
   doi={10.1090/S0002-9947-2014-06088-1},
}
    
  \end{biblist}
\end{bibdiv}

\end{document}